\newtheorem{theorem}{Theorem}[section]
\newtheorem{corollary}{Corollary}[theorem]
\newtheorem{lemma}[theorem]{Lemma}
\newtheorem{proposition}[theorem]{Proposition}
\theoremstyle{definition}
\newtheorem{definition}{Definition}[section]
\theoremstyle{remark}
\numberwithin{equation}{section}
\newcommand{\F}{\mathbb{F}_q}
\newcommand{\Fm}{\mathbb{F}_{q^m}}
\newcommand{\M}{\mathfrak{M}}
\newcommand{\N}{\mathfrak{N}}
\newcommand{\Tt}{T_{(t)}}
\newcommand{\tT}{t_{(T)}}
\title{ Characteristic functions for \MakeLowercase{(r, n)}-free and  \MakeLowercase{(f, g)}-free elements }
\keywords{Finite field, Character sums, (r, n)-free element, (f, g)-free element}
\subjclass[2010]{11T30, 11T23, 11A07}
\author{Himangshu Hazarika}
\address{Department of Mathematics, Behali Degree College, Borgang, Biswanath, Assam, India }
\email{diku\_95@tezu.ernet.in}
\author{Dhiren Kumar Basnet}
\address{Department of Mathematical Sciences, Tezpur University, Assam, India}
\email{dbasnet@tezu.ernet.in}
\begin{document}

\maketitle

\begin{abstract}

For a prime power $q$, $\F$ denotes the finite field of order $q$, and for $m\geq 2$, $\Fm$ denotes the extension field of degree $m$. We establish a characteristic function for the set of $(r,\, n)$-free elements of finite cyclic $R$-module for the Euclidean domain $R$. Furthermore, we explore $(f,\, g)$-freeness through polynomial values and finally give an expression for the characteristic function for the set of $(f,\, g)$-free elements.  

\end{abstract}

\section{Introduction}\label{sec8.1}

For a prime power $q$, $\F$ denotes the finite field of order $q$ and for $m\geq 2$, $\Fm$ denotes the extended field of order $q^m$. The multiplicative group $\Fm^*$ is a cyclic group whose generators are called \emph{primitive} elements of $\Fm$. For $e|q^m-1$, $\alpha \in \Fm^*$ is called $e$-free element if $d|e$ and $\alpha= \beta^d$, for some $\beta \in \Fm^*$ imply $d=1$. Furthermore $\alpha$ is primitive element if and only if it is $(q^n-1)$-free elements. Further, an element $\alpha \in \Fm$ is called \emph{normal} element of $\Fm$ over $\F$, if its Galois orbit i.e., $\lbrace \alpha, \alpha^q, \ldots, \alpha^{q^{n-1}}\rbrace$ forms a basis of $\Fm$ over $\F$.

 The problem of constructing an algorithm to find a primitive element of a finite field is one of the major open problems in finite field theory. Hence, researchers focus on relatively unchallenging problem, i.e., to create an algorithm to find an element of higher multiplicative order. This problem becomes more useful as the elements of higher multiplicative order may replace primitive elements in various applications. One of the notable work of Gao \cite{Gao} who established algorithm for elements of higher order in $\Fm$ with $m^{log_q m/(4 log_q(2 log_q m))-1/2}$ as the lower bound of the order. Later Popovych estimated lower bounds on elements of higher multiplicative orders of finite fields  in \cite{Popo}. Then Cohen, Kapetanakis and Reis defined $(r,\,n)$-free elements for finite fields and studied various properties of $(r, \, n)$-free elements in \cite{CGL}. They provided the characteristic function for the $(r,\,n)$-free elements, which is an extension of Vinogradov's formula for the characteristic function for primitive elements. From which we generalise the notions of $(r,\,  n)$-free and  $(f,\,  g)$-free elements of the finite cyclic module. Next we establish the characteristic functions and prove some results for $(r,\,  n)$-free and  $(f,\,  g)$-free elements of the finite cyclic module.

In this article, we generalise $(r, n)$-free and  $(f, g)$-free elements of the finite cyclic module. In the Section~\ref{sec8.1.1}, we recall the definition of $n$-primitive elements and its characteristic function. Furthermore, we discuss the definition of $(r,\, n)$-free elements, introduced by Cohen, Kapetanakis and Reis \cite{CGL} along with some basic properties. In  Section~\ref{sec8.2}, we explore the finite cyclic $R$-module. Then we establish some results with additive module in Section~\ref{sec8.3}. In Section~\ref{sec8.4} we define $(f, g)$-free elements and provide some properties. Finally, in Section~\ref{sec8.5}, we define $(f, g)$-freeness of elements through polynomial values and establish some additional results.

\section{Preparation}\label{sec8.1.1}


We begin this section with some definitions.

\begin{definition}
If $n|q^m-1$, then an element $\alpha \in \Fm$ is called \emph{n-primitive} element, if it is of order $(q^m-1)/n$. So primitive elements are nothing but 1-primitive elements.
\end{definition}

The characteristic function of $n$-free elements is given by Carlitz in \cite{LC1}, as following.

\begin{lemma}\label{Re8.1.1}

If $M$ is a divisor of $q^m - 1$, the characteristic function for the set of
elements  with multiplicative order $M$ in $\Fm$ is 
\begin{equation}\label{eq8.1.1}
\Psi_M(\alpha) = \frac{M}{q^m-1}\underset{d|M}{\sum}\frac{\mu(d)}{d}\underset{ord(\kappa)|\frac{d(q^m-1)}{M}}{\sum} \kappa(\alpha),
\end{equation}

where the sum runs over all the characters $\kappa$ such that $\mathrm{ord}(\kappa)$ divides $d(q^m-1)/M$.
\end{lemma}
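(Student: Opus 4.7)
The plan is to combine Möbius inversion on the divisors of $M$ with orthogonality of multiplicative characters on $\Fm^*$. Writing $N = q^m - 1$, note that since $M \mid N$ the integer $Nd/M$ is a genuine divisor of $N$ for every $d \mid M$; this fact will be used repeatedly.

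First I would establish the Möbius-type identity
$$\Psi_M(\alpha) \;=\; \sum_{d\mid M}\mu(d)\,\mathbf{1}\bigl[\alpha^{M/d}=1\bigr].$$
For $\alpha \in \Fm^*$ of multiplicative order $k$, the condition $\alpha^{M/d}=1$ is equivalent to $k \mid M/d$; when $k \mid M$ this reduces further to $d \mid M/k$, and when $k \nmid M$ no term contributes. In the first case, the inner sum collapses by $\sum_{d\mid n}\mu(d)=[n=1]$ to the indicator $[k=M]$, which is precisely the characteristic function of order $M$.

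Next I would invoke the standard orthogonality on the cyclic character group $\widehat{\Fm^*}$: for any divisor $e$ of $N$, the characters with $\mathrm{ord}(\kappa)\mid e$ form a subgroup of order $e$, and
$$\sum_{\mathrm{ord}(\kappa)\mid e}\kappa(\alpha) \;=\; \begin{cases} e & \text{if } \alpha^{N/e}=1,\\ 0 & \text{otherwise.}\end{cases}$$
Rewriting $\alpha^{M/d}=1$ as $\alpha^{N/e}=1$ with $e=Nd/M$ converts each indicator into a character sum,
$$\mathbf{1}\bigl[\alpha^{M/d}=1\bigr] \;=\; \frac{M}{Nd}\sum_{\mathrm{ord}(\kappa)\mid d(q^m-1)/M}\kappa(\alpha).$$
Substituting back into the Möbius expression and pulling out the common factor $M/N$ reproduces the formula in the statement.

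The argument is two standard ingredients glued together, so I do not anticipate a serious obstacle; the point that most repays care is the translation $\alpha^{M/d}=1 \iff \alpha^{N/e}=1$ with $e=Nd/M$, together with the verification that this $e$ is a legitimate divisor of $N$ so that the orthogonality relation applies verbatim. Once that bookkeeping is tracked correctly, the rest is routine algebra.
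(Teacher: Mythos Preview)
Your argument is correct: the M\"obius identity $\sum_{d\mid M}\mu(d)\,\mathbf{1}[\alpha^{M/d}=1]=[\,\mathrm{ord}(\alpha)=M\,]$ together with the orthogonality relation $\sum_{\mathrm{ord}(\kappa)\mid e}\kappa(\alpha)=e\cdot\mathbf{1}[\alpha^{N/e}=1]$ (applied with $e=Nd/M$, which is indeed a divisor of $N$ since $d\mid M\mid N$) yields the stated formula after the substitution you describe. Note, however, that the paper does not give its own proof of this lemma; it is quoted from Carlitz~\cite{LC1} without argument, so there is no in-paper proof to compare your approach against.
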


For  a multiplicative cyclic group, Cohen \emph{et al.} provided the following result in \cite[Lemma~2.5]{CGL}.

\begin{lemma}
For positive integers $r, \, n$,
\begin{equation*}
T(r, n) := \underset{l|r}{\sum} \frac{|\mu(l_{(n)})|}{\phi(l_{(n)})}. \phi(l) = \mathrm{gcd}(r, n) . W (\mathrm{gcd}(r, r_{(n)})),  
\end{equation*}

where $W(k)$ is the number of square-free divisors of $k$ and  $x_{(y)}= \frac{x}{\mathrm{ gcd}(x,\,  y)}.$
\end{lemma}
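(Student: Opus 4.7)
The plan is to show that both sides of the claimed identity are multiplicative arithmetic functions of $r$ (with $n$ held fixed), and then to verify the identity at prime powers, which collapses the problem to a short case analysis.

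First I would verify multiplicativity of the left-hand side. The map $l \mapsto l_{(n)} = l/\gcd(l,n)$ is multiplicative in $l$: if $\gcd(l_1, l_2) = 1$ then $\gcd(l_1 l_2, n) = \gcd(l_1, n)\gcd(l_2, n)$, whence $(l_1 l_2)_{(n)} = (l_1)_{(n)} (l_2)_{(n)}$. Since $|\mu|$ and $\phi$ are multiplicative, the summand $h(l) := \frac{|\mu(l_{(n)})|}{\phi(l_{(n)})}\,\phi(l)$ is multiplicative in $l$, and therefore $T(r,n) = \sum_{l \mid r} h(l)$ is multiplicative in $r$. For the right-hand side, the same argument shows $r_{(n)}$ is multiplicative in $r$; since $(r_i)_{(n)} \mid r_i$, the quantity $\gcd(r, r_{(n)})$ is multiplicative; and $W$ is multiplicative on coprime inputs, so the product $\gcd(r, n)\cdot W(\gcd(r, r_{(n)}))$ is multiplicative in $r$.

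Next I would verify the identity for $r = p^a$ a prime power. Writing $n = p^b m$ with $\gcd(p, m) = 1$, a divisor $l = p^j$ of $r$ satisfies $l_{(n)} = p^{\max(0, j-b)}$, and a short calculation gives
\begin{equation*}
h(p^j) = \begin{cases} \phi(p^j) & \text{if } 0 \le j \le b, \\ p^b & \text{if } j = b+1, \\ 0 & \text{if } j \ge b+2, \end{cases}
\end{equation*}
where the vanishing uses $\mu(p^{j-b}) = 0$ as soon as $j - b \ge 2$, and where the value $p^b$ at $j=b+1$ comes from $\phi(p^{b+1})/\phi(p) = p^b$. If $b \ge a$, only the first case contributes and $T(p^a,n) = \sum_{j=0}^{a}\phi(p^j) = p^a$, which agrees with $\gcd(p^a, n)\cdot W(1) = p^a \cdot 1$. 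If $b < a$, then $T(p^a, n) = \sum_{j=0}^{b}\phi(p^j) + p^b = 2p^b$, which agrees with $\gcd(p^a, n)\cdot W(p^{a-b}) = p^b\cdot 2$.

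The computation is essentially routine; the only subtlety is in organizing the prime-power case analysis correctly, in particular recognizing that the sole nonzero contribution from divisors with $j > b$ is the single term $j = b+1$, where $\mu(l_{(n)}) = \mu(p)$ is nonzero. Once multiplicativity and this observation are in hand, the theorem follows at once by comparing the two multiplicative functions of $r$ on prime powers.
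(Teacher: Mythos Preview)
Your proof is correct and follows essentially the same approach as the paper. The paper does not prove this integer statement directly (it is quoted from \cite{CGL}), but it proves the Euclidean-domain generalisation in Lemma~\ref{Re8.2.2} by exactly your method: observe that both sides are multiplicative in $r$, reduce to $r=p^a$, write $n=p^b n_0$, and compare the two sides case by case on the relation between $a$ and $b$. Your case split ($b\ge a$ versus $b<a$) is a slightly cleaner reorganisation of the paper's three cases ($b=0$; $0<b\le a$ with subcases $b<a$, $b=a$; $b>a$), but the computations are the same.
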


\subsection*{$(r,\,  n)$-free elements}

Following concepts are provided by Cohen, Kapetanakis and Reis in \cite{CGL}.

\begin{definition}
 Let $C_Q$ denote a multiplicative cyclic group of order $Q$. Let
$n$ be a divisor of $Q$ and $r$ be a divisor of $Q/n$. Then an element $\alpha \in C_Q$ is called $(r,\,  n)$-free if the following hold:
\begin{itemize}
\item[(i)] $\alpha$ is in the subgroup $C_{Q/n}$, i.e., $\mathrm{ord}(\alpha)|\frac{Q}{n}$;

\item[(ii)] $\alpha$ is $r$-free in $C_{Q/n}$, i.e., if $\alpha = \beta^a$ with $\beta \in C_{Q/n}$ and $a|r$, then $a = 1$.
\end{itemize}

\end{definition}

\begin{lemma}\emph{\cite[Lemma~3.3]{CGL}}
Let $n$ be a divisor of $Q$ and $r$ be a divisor of $Q/n$. An element
$\alpha \in C_Q$ is $(r,\,  n)$-free if and only if $\alpha = \beta^n$
for some $\beta \in C_Q$ but $\alpha$ is not of the form $\beta^{np}_0$  where $\beta_0 \in C_Q$, for every prime divisor $p$ of $r$. In particular, $\alpha \in C_Q$ is
$(r,\,  n)$-free if and only if $\mathrm{gcd}\left( rn,\,  \frac{Q}{\mathrm{ord}(\alpha)}\right) = n$.
\end{lemma}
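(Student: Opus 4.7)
The plan is to separate the two defining conditions of $(r,n)$-freeness, translate each into the language of powers and gcd conditions, and then package the pair of conditions into a single divisibility statement. First I would handle condition (i): in the cyclic group $C_Q$ the subgroup $C_{Q/n}$ is exactly the image of the $n$-th power map, so $\mathrm{ord}(\alpha)\,|\,Q/n$ is equivalent to $\alpha=\beta^n$ for some $\beta\in C_Q$. This gives the first clause of the claimed equivalence.

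For condition (ii) the key step is a reduction to prime divisors. If $a\,|\,r$ and $\alpha=\gamma^a$ with $\gamma\in C_{Q/n}$ and $a>1$, then for any prime $p$ dividing $a$ one can write $\alpha=(\gamma^{a/p})^p$ with $\gamma^{a/p}\in C_{Q/n}$; conversely, each prime $p\,|\,r$ is itself a divisor of $r$. Hence $\alpha$ is $r$-free in $C_{Q/n}$ if and only if, for every prime $p\,|\,r$, $\alpha$ is not a $p$-th power in $C_{Q/n}$. But $\alpha=\gamma^p$ with $\gamma\in C_{Q/n}$ is, by the first step applied to $\gamma$, the same as $\alpha=\beta_0^{np}$ for some $\beta_0\in C_Q$. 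Combining yields the first equivalence of the statement.

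For the ``in particular'' clause I would invoke the standard fact that in a cyclic group $C_Q$, the equation $\alpha=\delta^k$ has a solution $\delta\in C_Q$ if and only if $\mathrm{gcd}(k,Q)\,|\,Q/\mathrm{ord}(\alpha)$. Setting $M=Q/\mathrm{ord}(\alpha)$, condition (i) becomes $n\,|\,M$ (since $\mathrm{gcd}(n,Q)=n$), while the failure of $\alpha=\beta_0^{np}$ for a prime $p\,|\,r$ becomes $\mathrm{gcd}(np,Q)\nmid M$. Using the identity $\mathrm{gcd}(np,Q)=n\,\mathrm{gcd}(p,Q/n)$ together with $p\,|\,r\,|\,Q/n$, which forces $\mathrm{gcd}(p,Q/n)=p$, this simplifies to $p\nmid M/n$. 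Therefore $(r,n)$-freeness is equivalent to $n\,|\,M$ together with $\mathrm{gcd}(r,M/n)=1$, and a final application of $\mathrm{gcd}(na,nb)=n\,\mathrm{gcd}(a,b)$ rewrites this pair of conditions as the single equation $\mathrm{gcd}(rn,M)=n$.

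The main obstacle, really the only delicate point, is keeping the gcd bookkeeping honest: one must justify $\mathrm{gcd}(np,Q)=n\,\mathrm{gcd}(p,Q/n)$, observe that $p\,|\,Q/n$ collapses this to $np$, and check that the rewriting $\mathrm{gcd}(rn,M)=n\,\mathrm{gcd}(r,M/n)$ is valid precisely under the hypothesis $n\,|\,M$. Everything else is either a direct use of the cyclic-group structure or the elementary reduction from arbitrary divisors of $r$ to its prime divisors.
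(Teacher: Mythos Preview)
Your argument is correct. Each step checks out: the identification of $C_{Q/n}$ with the set of $n$-th powers, the reduction of $r$-freeness to prime divisors, the translation via the solvability criterion $\gcd(k,Q)\mid Q/\mathrm{ord}(\alpha)$, and the gcd manipulations at the end (including the observation that $\gcd(rn,M)=n$ forces $n\mid M$, which covers the case where condition (i) fails).

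There is nothing to compare against, however: the paper does not prove this lemma at all. It is quoted verbatim from \cite[Lemma~3.3]{CGL} as background and left without proof. The paper does prove a closely related statement, Lemma~\ref{Re8.2.5}, which is the analogue for a general cyclic $R$-module; that proof takes a slightly different tack, fixing a generator $b$ of the relevant submodule and arguing directly with the representation $a=k\circ b$ rather than invoking the abstract solvability criterion you use. Your route is arguably cleaner for the group case, since the criterion ``$\alpha=\delta^k$ is solvable iff $\gcd(k,Q)\mid Q/\mathrm{ord}(\alpha)$'' packages exactly what is needed and avoids tracking an explicit generator.
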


In the next section, we provide some results for finite cyclic $R$-module.

\section{Finite cyclic $R$-module}\label{sec8.2}

Let $R$ be an Euclidean domain and $\M$ be a finite cyclic $R$-module, under the rule $r \, o \,  \alpha$, where $r \in R$ and $\alpha \in \M$. Let $g\in \M$ be a generator of $\M$. Since $\M$ has similar properties as abelian group, hence $\widehat{\M}$ can be defined as an $R$-module, under the rule, $r \, o \, \psi\, : \, \alpha \, \mapsto \, \psi(r \, o \, \alpha)$, for $\alpha \in \M$ and $\psi \in \widehat{\M}$. In this article,  whenever a
sum runs through the divisors of some element of $R$ or when a condition is  applicable to all the members of a conjugacy class of $R/\sim$, we will consider just one representative, as an element of $R$ and will treat as same with its conjugates that hold the equivalence relation $r \sim s  \Leftrightarrow r = us$, for some $u \in R^*$.

  For $\alpha \in \M$,  from the properties of $R$ and $\M$, we have that  the annihilator of $\alpha$ is an ideal of $R$, hence it has a unique generator. This  annihilator is called the \emph{order} of $\alpha$ and we denote it with $\mathrm{ord}(\alpha)$. We set $m := ord(g)$. 

For $d\in R$, the Euler function is defined as $\phi(d)= |(R/d R)^*|$.

For $a,b \in R$, we denote $a_{(b)}$ as $$a_{(b)}= \frac{a}{gcd(a, b)}.$$

\begin{lemma}\label{Re8.2.1} Let $r\in R$ and $b$ is a divisor of $m$. An element $\alpha\in \M$ is of the form $r \circ \beta$ with $ord(\beta)= b$ if and only if $ord(\alpha)= b_{(n)}$, where $n= \mathrm{ gcd}(r,\,  m)$.
\end{lemma}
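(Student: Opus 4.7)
The plan is to fix the generator $g$ and parametrise $\M = R \circ g$. A direct annihilator computation gives $\mathrm{ord}(s \circ g) = m_{(s)}$ for every $s \in R$, so for $b \mid m$ the elements of $\M$ of order exactly $b$ are precisely those of the form $(m/b)\,t \circ g$ with $\gcd(t,b)=1$. With this parametrisation, both directions of the lemma become congruence questions in $R$.

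For the forward implication, I would write $\beta = (m/b)\,t \circ g$ with $\gcd(t,b)=1$, so that $\alpha = r(m/b)\,t \circ g$ and
\[
\mathrm{ord}(\alpha) \;=\; \frac{m}{\gcd(m,\,r(m/b)t)} \;=\; \frac{m}{(m/b)\gcd(b,rt)} \;=\; \frac{b}{\gcd(b,r)},
\]
where the last step uses $\gcd(b,t)=1$. The identity $\gcd(b,r)=\gcd(b,n)$ then converts this into $b_{(n)}$: since $b\mid m$, $\gcd(b,r)$ divides both $b$ and $r$ and hence $\gcd(r,m)=n$, while conversely $\gcd(b,n)$ divides $n\mid r$ and $b$, so $\gcd(b,n)\mid\gcd(b,r)$; the two gcd's are therefore associates in $R$, which in the working convention of the paper is equality.

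For the converse, I would write $\alpha = (m/b_{(n)})\,u \circ g$ with $\gcd(u, b_{(n)})=1$ and set $d=\gcd(b,n)$, so that $b = d\,b_{(n)}$ and $m/b_{(n)} = (m/b)d$. Searching for $\beta = (m/b)\,t \circ g$ of order $b$ with $r \circ \beta = \alpha$ then becomes the congruence $r\,t \equiv d\,u \pmod{b}$ in $R$. Since $d\mid n\mid r$, writing $r = d\,r'$ (and using $\gcd(r',b_{(n)})=1$, which follows from $\gcd(r,b)=d$) this reduces to $r'\,t \equiv u \pmod{b_{(n)}}$. Because $R$ is a PID and $r'$ is a unit mod $b_{(n)}$, this has a unique solution $t_0$ modulo $b_{(n)}$, and $t_0$ is itself a unit mod $b_{(n)}$ since $u$ is.

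The step I expect to be the main obstacle is arranging $\gcd(t,b)=1$ rather than merely $\gcd(t,b_{(n)})=1$; the extra factor $d$ of $b$ may share prime factors with $t_0$. Primes $p\mid d$ that also divide $b_{(n)}$ are automatically excluded by the condition $t\equiv t_0\pmod{b_{(n)}}$, since $\gcd(t_0,b_{(n)})=1$. For primes $p\mid d$ with $p\nmid b_{(n)}$, I would apply the Chinese Remainder Theorem in the PID $R$ to lift $t_0$ to some $t$ with $t\equiv 1$ modulo each such prime power appearing in $d$. The resulting $t$ is coprime to $d\,b_{(n)} = b$, so $\beta = (m/b)\,t \circ g$ has order $b$ and satisfies $r \circ \beta = \alpha$, which closes the argument.
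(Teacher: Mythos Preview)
Your argument is correct. The forward direction matches the paper's computation (it also writes $\mathrm{ord}(\alpha)=b/\gcd(b,r)$ and then identifies $\gcd(b,r)$ with $\gcd(b,n)$). For the converse, however, the paper takes a different route: instead of explicitly constructing $\beta$, it counts. Setting $A_b=\{\,r\circ\beta:\mathrm{ord}(\beta)=b\,\}$, it fixes some $\delta$ of order $b$, notes that the elements of $A_b$ are the $(ir)\circ\delta$ with $\gcd(i,b)=1$, and shows that $(ir)\circ\delta=(jr)\circ\delta$ iff $i\equiv j\pmod{b_{(n)}}$; thus $|A_b|$ equals the size of the image of $(R/bR)^*$ in $R/b_{(n)}R$, namely $\phi(b_{(n)})$. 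Since the forward direction gives $A_b\subseteq\{\omega:\mathrm{ord}(\omega)=b_{(n)}\}$ and the latter set also has cardinality $\phi(b_{(n)})$, the two sets coincide. Your constructive approach via the congruence $r't\equiv u\pmod{b_{(n)}}$ and a CRT lift to make $t$ coprime to $b$ gives more: an explicit preimage $\beta$. The paper's counting argument is shorter and sidesteps the CRT step entirely, but yours would be preferable in any setting where one actually needs to produce $\beta$.
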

\begin{proof}
$(\Rightarrow)$ Let $\alpha\in \M$ be of the form $r \circ \beta$ with $ord( \beta )= b$. Then $b$ is the generator of the ideal (ideal of annihilators of $\beta$) in R. Then $\alpha= r\circ \beta$ has order $\frac{b}{\mathrm{gcd }(b,\, r)}= \frac{b}{\mathrm{ gcd} (b, \, n )}= b_{(n)}$.

$(\Leftarrow)$ Let $\mathrm{ ord}(\alpha)= b_{(n)}$, where $n=\mathrm{ gcd}(r,\,  m)$. To show $\alpha\in \M$ is of the form $r \circ \beta$ with $\mathrm{ ord}(\beta)=b$.

Consider the set $A_b= \{ r\circ \beta |\mathrm{ ord}(\beta)=b \}.$ Then the cardinality of the set is $\phi(b_{(n)})$. Now for any element $\delta$ of order $b$, the elements of $A_b$ are $i\circ (r\circ \delta)= (ir)\circ \delta$, where $\mathrm{  gcd}(i,\,  b)=1$. So we have, $(ir)\circ \delta = (jr)\circ \delta$ if and only if $i = j + y b_{(n)}$, for some $y \in R$, i.e., $i\equiv j (\mathrm{ mod} (b_{(n)}))$. (This $\mathrm{ mod}$ is analogous to $natural\,  mod$.)

Therefore the cardinality of the set $A_b$ is the number of in-congruent $x (\mathrm{ mod}(b_{(n)}))$, as $x\in R$ such that $\mathrm{ gcd}(x,\,  b)=1$. Since $b_{(n)}$ divides $b$, hence cardinality is $\phi(b_{(n)})$.

This completes the proof.
\end{proof}

\begin{lemma}\label{Re8.2.2} For $n,\,  r \in R$, we have that,
$$T(r,\, n):= \underset{t|r}{\sum}\frac{|\mu (t_{(n)})|}{\phi (t_{(n)})}. \phi (t)= \left| \left(R/\alpha R\right)\right|. W\left(\left| \left( R/ \beta R\right) \right|\right), $$
where $\alpha =\mathrm{ gcd}(n,\,  r )$, $\beta =\mathrm{ gcd} (r,\,  r_{(n)})$ and $W(a)$ denotes the number of square free divisors of $a$.
\end{lemma}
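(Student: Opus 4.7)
The strategy is to exploit unique factorization in the Euclidean (hence UFD) ring $R$: reduce $T(r,n)$ to a product of local factors, one per prime dividing $r$, and evaluate each locally. First note that $r_{(n)} = r/\gcd(r,n)$ already divides $r$, so $\beta = \gcd(r, r_{(n)}) = r_{(n)} = r/\alpha$. Every divisor $t \mid r$ factors uniquely (up to units) as $t = \prod_{p \mid r} p^{e_p}$ with $0 \le e_p \le v_p(r)$, and then $t_{(n)} = \prod_p p^{\max(0,\, e_p - v_p(n))}$. Since $\phi$ and $|\mu|$ are multiplicative on pairwise coprime arguments, the summand factors over primes, giving
$$T(r,n) = \prod_{p \mid r} S_p, \qquad S_p := \sum_{e=0}^{v_p(r)} \frac{|\mu(p^{\max(0,\, e - v_p(n))})|}{\phi(p^{\max(0,\, e - v_p(n))})}\,\phi(p^e).$$

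Next I fix a prime $p \mid r$ and set $a = v_p(r)$, $b = v_p(n)$. Using the standard identity $\sum_{d \mid p^k} \phi(d) = |R/p^k R|$ (which follows from the PID structure of $R$), I distinguish two cases. If $a \le b$, every summand has $\max(0, e-b) = 0$, so $S_p = \sum_{e=0}^{a} \phi(p^e) = |R/p^a R|$. If $a > b$, the terms with $e > b+1$ vanish (since $|\mu(p^{e-b})| = 0$ for $e-b \ge 2$), those with $e \le b$ sum to $|R/p^b R|$, and the single $e = b+1$ term contributes $\phi(p^{b+1})/\phi(p) = |R/p^b R|$, totalling $2\,|R/p^b R|$. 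In either case $S_p = 2^{[a > b]} \cdot |R/p^{v_p(\alpha)}R|$.

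Finally, $[a>b]=1$ precisely when $p \mid r_{(n)} = \beta$. Multiplying the local factors and applying the Chinese remainder theorem yields $T(r,n) = 2^{\omega(\beta)} \cdot |R/\alpha R|$, where $\omega(\beta)$ is the number of distinct prime divisors of $\beta$; since $2^{\omega(\beta)}$ equals the number of square-free divisors of $\beta$, this agrees with the asserted formula under the natural interpretation of $W$ in the module setting. The main technical hurdle, and the step I would spell out most carefully, is checking that the integer-theoretic identities used above (multiplicativity of $\phi$ and $\mu$, and the divisor-sum $\sum_{d\mid m}\phi(d) = |R/mR|$) carry over verbatim to $R$. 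This is where one invokes the PID hypothesis: CRT gives $R/mR \cong \prod_p R/p^{v_p(m)}R$ for any $m$ with finitely many prime factors, and within each $R/p^k R$ the local Euler function behaves exactly as in $\mathbb{Z}$, provided the residue fields $R/pR$ are finite — which is ensured by the ambient assumption that the cyclic module $\mathfrak{M}$ of interest is finite.
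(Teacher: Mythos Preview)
Your argument is correct and follows essentially the same route as the paper: reduce by multiplicativity to $r=p^{a}$, then split cases according to how $v_{p}(n)$ compares with $a$ (the paper uses three cases $b=0$, $0<b\le a$, $b>a$, while you use the cleaner dichotomy $a\le b$ versus $a>b$, and you also make explicit that $\beta=r_{(n)}$, which the paper never states). Your caution about the meaning of $W$ is well placed: the paper asserts that $r\mapsto W(|R/\beta R|)$ is multiplicative without comment, and this is literally correct only if $W$ is read as counting square-free divisors of $\beta$ in $R$ (i.e.\ $2^{\omega(\beta)}$) rather than of the integer $|R/\beta R|$.
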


\begin{proof}

We have $f_n(r):= \frac{|\mu (t_{(n)})|}{\phi (t_{(n)})}.\phi (t)$  and $g_n(r):= \left| \left(R/\alpha R\right)\right|. W\left(\left| \left( R/ \beta R\right) \right|\right) $, which are multiplicative functions in $R$.
 Hence the same holds for $T(n,\, r)$ and hence it suffices to prove the equality $T(n,\, r)= g_n(r)$ in the case, where $r$ is a prime power in $R$. We use $r=p^a$, where $p$ is a prime in $R$ and $a$ is the exponent of $p$ such that $a\in \mathbb{N}$. 
We write $n=p^bn_0$, where  $b$ is an  integer with $b \geq 0$ and $\mathrm{ gcd}(p,\,  n_0)=1$. Now we have the following cases.

\begin{itemize}

\item[(i)] If $b=0$, we have $\mathrm{ gcd}(r,\,  n)=1$, $r_{(n)}=r$ i.e., $\alpha=1,\, \beta = r.$

$T(n,\, r)= \underset{t|r}{\sum}\frac{|\mu (t_{(n)})|}{\phi (t_{(n)})}.\phi(t)= \phi (1) +\frac{|\mu (p)|}{\phi(p)}\phi(p) =2,$

and $g_n(r)=  \left| \left(R/ R\right)\right|. W\left(\left| \left( R/ p^a R\right) \right|\right) =  W\left(\left| \left( R/ p R\right) \right|^a\right)= W\left(\left| \left( R/ p R\right) \right|\right)=2.$


\item[(ii)]  If $0<b \leq a$, then we have $\mathrm{ gcd}(n,\,  r)= p^b$ and $r_{(n)}= p^{a-b}$, i.e. $\alpha= p^b$ and $\beta=  p^{a-b}$ if $b<a$,  $\beta=1$ if $b=a$. 

Then for $b<a$, $T(n,\,  r)= \underset{i=0}{\overset{b}{\sum}}\phi(p^i)+ \underset{i=b+1}{\overset{a}{\sum}}\frac{|\mu (p^{i-b})|}{\phi (p^{i-b})} \phi(p^i)  =  \underset{i=0}{\overset{b}{\sum}}\phi(p^i)+ \frac{1}{\phi (p)} \phi(p^{b+1}) = 2 |(R/p^b R)|.$

If $b=a$, then $T(n,\, r)= |(R/p^b R)|.$

When $b<a$, $\alpha= p^b$ and $\beta= p^{a-b}$ and hence $g_n(r)=  \left| \left(R/ p^b R\right)\right|. W\left(\left| \left( R/ p^{a-b} R\right) \right|\right) =  \left| \left(R/p^b R\right)\right|. W\left(\left| \left( R/ p R\right) \right|^{a-b}\right)=  \left| \left(R/p^b R\right)\right|.W\left(\left| \left( R/ p R\right) \right|\right)=2 \left| \left(R/p^b R\right)\right|. $

Similarly, when $b=a$, then $\alpha= p^b$ and $\beta=1$. In this case
$g_n(r)= |(R/ p^b R)|.$  

\item[(iii)] If $b>a$, then $\mathrm{ gcd}(n,\,  r)= p^a$ and $r_{(n)}=1$, i.e. $\alpha= p^a$, $\beta = 1.$ Then $T(n,\,  r)= \underset{i=0}{\overset{a}{\sum}} \frac{|\mu (1)|}{\phi (1)}\phi (p^i)= |(R/ p^a R)|.$  For $\alpha= p^a$, $\beta=1$, $g_n(r)= |(R/p^a R)|$.

\end{itemize}

\end{proof}


Next we have the characteristic function for the set of elements in $\M$ with order $n$.
\begin{lemma}\emph{\cite[Lemma~3.4]{CGL}} \label{Re8.2.3} If $n$ is a divisor of $m$, then the characteristic function for the set of elements in $\M$ with order $n$ can be expressed as
\begin{equation*}
 \Lambda_n(\omega)= \frac{|(R/ n R)|}{|\M|}\underset{d|n}{\sum}\frac{\mu (n)}{|(R/ d R )|}\underset{ord(\chi)=d}{\sum}\chi(\omega).
\end{equation*}

\end{lemma}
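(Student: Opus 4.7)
The plan is to mirror Carlitz's derivation of Lemma~\ref{Re8.1.1} in the cyclic $R$-module setting, replacing the multiplicative group $\Fm^{*}$ with $\M$ and the integer M\"obius function with its analogue on divisibility classes of $R$. The backbone of the argument has two ingredients: character orthogonality on $\M$, and M\"obius inversion over divisors of $n$.

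For the orthogonality step, I would exploit that $\widehat{\M}$ is itself a cyclic $R$-module isomorphic to $\M$, and that the characters with $\mathrm{ord}(\chi)\mid D$ are precisely those annihilating the submodule $D\,\circ\,\M$. Standard orthogonality on the quotient $\M/(D\,\circ\,\M)$ then yields
\begin{equation*}
\underset{\mathrm{ord}(\chi)\mid D}{\sum}\chi(\omega)=|R/DR|\cdot\mathbf{1}[\omega\in D\,\circ\,\M].
\end{equation*}
Writing $\omega=k\,\circ\,g$ with $g$ the chosen generator of $\M$, the condition $\omega\in D\,\circ\,\M$ is equivalent to $D\mid k$ in $R/mR$, which, since $D\mid m$, is in turn equivalent to $\mathrm{ord}(\omega)=m/\gcd(k,m)$ dividing $m/D$.

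For the inversion step, I would invoke $\underset{d\mid e}{\sum}\mu(d)=\mathbf{1}[e=1]$ valid in $R$ read modulo units (the convention adopted in Section~\ref{sec8.2}), giving
\begin{equation*}
\mathbf{1}[\mathrm{ord}(\omega)=n]=\underset{d\mid n}{\sum}\mu(d)\cdot\mathbf{1}[\mathrm{ord}(\omega)\mid n/d].
\end{equation*}
Setting $D=d\,m/n$ in the orthogonality identity converts each indicator on the right into a normalised character sum. A routine rearrangement using the multiplicative relation $|R/(dm/n)R|=|R/dR|\cdot|\M|/|R/nR|$ for ideal norms collapses the normalising constants into the prefactor $|R/nR|/|\M|$ and re-indexes the character sums to produce the stated expression for $\Lambda_n(\omega)$.

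The main obstacle is the bookkeeping of translating from $\mathbb{Z}$ to a general Euclidean domain. Three technical points deserve care: the multiplicativity of the ideal norm $a\mapsto|R/aR|$, needed so that the normalising constants telescope; the identification of $\widehat{\M}$ as a cyclic $R$-module with $|\{\chi:\mathrm{ord}(\chi)\mid D\}|=|R/DR|$; and the consistency of divisor sums with respect to the associate equivalence on $R$. A useful sanity check is that summing $\Lambda_n(\omega)$ over $\omega\in\M$ isolates the trivial character and evaluates to $|R/nR|\underset{d\mid n}{\sum}\mu(d)/|R/dR|=\phi(n)$, matching the count of elements of order $n$ produced by Lemma~\ref{Re8.2.1}.
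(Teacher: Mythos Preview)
The paper does not supply a proof of this lemma: it is stated with the citation \cite[Lemma~3.4]{CGL} and no argument is given (nor for the reformulation in Lemma~\ref{Re8.2.4}). So there is nothing in the paper to compare your proposal against.

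On the substance of your argument: the two ingredients you isolate---orthogonality on $\M/(D\circ\M)$ and M\"obius inversion over divisors in $R$---are exactly right, and the norm identity $|R/(dm/n)R|=|R/dR|\cdot|\M|/|R/nR|$ is valid in a PID since $aR/abR\cong R/bR$. Carrying this through yields
\[
\Lambda_n(\omega)=\frac{|R/nR|}{|\M|}\sum_{d\mid n}\frac{\mu(d)}{|R/dR|}\sum_{\mathrm{ord}(\chi)\mid dm/n}\chi(\omega),
\]
which is the Carlitz-type expression of Lemma~\ref{Re8.1.1} transported to $\M$. Your final sentence, however, hides the only nontrivial remaining step: the stated lemma has the inner sum over characters with $\mathrm{ord}(\chi)=d$ (and $d\mid n$), whereas your derivation produces $\mathrm{ord}(\chi)\mid dm/n$. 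These two indexings are \emph{not} related by a simple relabelling; passing from one to the other requires a further M\"obius-type rearrangement (grouping characters by exact order and exchanging the order of summation), and the coefficients change form in the process. As printed, the displayed formula already carries at least one evident typo ($\mu(n)$ in place of $\mu(d)$), so you should consult \cite{CGL} directly for the intended target before trying to match it; your derivation is correct up to that last cosmetic conversion.
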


\begin{lemma}\label{Re8.2.4} For $r=m/n$ with $n$ is a divisor of $m$, the above function can be rewritten as 
\begin{equation*}
 \Lambda_n(\omega)= \frac{\phi (m/n)}{|\M|}\underset{t|m}{\sum}\frac{\mu (t_{(n)})}{\phi (t_{(n)})}\underset{ord(\chi)=t}{\sum}\chi(\omega).
\end{equation*}
 
\end{lemma}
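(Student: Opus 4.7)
The plan is to derive Lemma~\ref{Re8.2.4} directly from Lemma~\ref{Re8.2.3} by reorganising the double sum in the latter so that the outer summation variable becomes the exact order of a character rather than a divisor of $n$. Setting $S_t(\omega):=\sum_{\mathrm{ord}(\chi)=t}\chi(\omega)$, I would expand $\sum_{\mathrm{ord}(\chi)\mid md/n}\chi(\omega)=\sum_{t\mid md/n}S_t(\omega)$, substitute into the formula of Lemma~\ref{Re8.2.3}, and interchange the order of summation to obtain
\[
\Lambda_n(\omega)\;=\;\frac{|(R/nR)|}{|\M|}\sum_{t\mid m}S_t(\omega)\sum_{\substack{d\mid n\\ t\mid md/n}}\frac{\mu(d)}{|(R/dR)|},
\]
the outer range $t\mid m$ being automatic from $d\mid n$ and $t\mid md/n$.

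The heart of the argument is then the simplification of the inner $d$-sum. A prime-by-prime valuation check, using $n,t\mid m$, gives the identity
\[
\frac{tn}{\gcd(tn,m)}\;=\;\frac{t}{\gcd(t,\,m/n)}\;=\;t_{(r)},\qquad r:=m/n,
\]
so the condition ``$d\mid n$ and $t\mid md/n$'' collapses to ``$t_{(r)}\mid d\mid n$''. Writing $d=t_{(r)}d'$ with $d'\mid n/t_{(r)}$ and invoking the multiplicativity of $\mu$ together with the multiplicativity of the norm $|(R/\cdot R)|$ on coprime arguments, the inner sum factors as
\[
\frac{\mu(t_{(r)})}{|(R/t_{(r)}R)|}\sum_{\substack{d'\mid n/t_{(r)}\\ \gcd(d',t_{(r)})=1}}\frac{\mu(d')}{|(R/d'R)|}\;=\;\frac{\mu(t_{(r)})}{|(R/t_{(r)}R)|}\!\!\prod_{\substack{p\mid n/t_{(r)}\\ p\nmid t_{(r)}}}\!\!\Bigl(1-\tfrac{1}{|(R/pR)|}\Bigr).
\]

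Finally, I would combine the remaining Euler product with the prefactor $|(R/nR)|/|(R/t_{(r)}R)|$ and use the defining identity $\phi(k)=|(R/kR)|\prod_{p\mid k}(1-1/|(R/pR)|)$ to identify the result as $\phi(m/n)/\phi(t_{(r)})$, producing the advertised coefficient $\tfrac{\phi(m/n)}{|\M|}\cdot\tfrac{\mu(t_{(n)})}{\phi(t_{(n)})}$ of $S_t(\omega)$ (with the subscript $n$ matched to the companion parameter $r$ via $r=m/n$). The main obstacle I expect is precisely this closing identification: matching the Möbius-cum-Euler-product expression against $\phi(m/n)/\phi(t_{(r)})$ requires a careful prime-by-prime case split on how $v_p(t)$ compares with $v_p(n)$ and $v_p(m)$. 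This bookkeeping can alternatively be absorbed into an application of Lemma~\ref{Re8.2.2}, which is already phrased in the Euclidean-domain shape $\sum_{d}\mu(d_{(\cdot)})/\phi(d_{(\cdot)})\cdot\phi(d)$ tailored to evaluating sums of exactly this form.
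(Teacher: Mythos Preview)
The paper supplies no proof of this lemma; it is presented as a bare restatement of Lemma~\ref{Re8.2.3}. Your plan --- split the inner character sum according to exact order, interchange the two summations, and evaluate the resulting Dirichlet-type sum over $d$ --- is the natural route, and Steps~1--4 of your outline are correct. In particular the reduction
\[
\{d:\ d\mid n,\ t\mid md/n\}\;=\;\{d:\ t_{(r)}\mid d\mid n\},\qquad r=m/n,
\]
is right, as is the Euler-product evaluation of the inner $d$-sum.

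The genuine gap is in your closing identification. What your computation actually yields is
\[
\frac{|(R/nR)|}{|(R/t_{(r)}R)|}\prod_{\substack{p\mid n/t_{(r)}\\ p\nmid t_{(r)}}}\Bigl(1-\tfrac{1}{|(R/pR)|}\Bigr)\;=\;\frac{\phi(n)}{\phi(t_{(r)})},
\]
not $\phi(m/n)/\phi(t_{(r)})$: the prefactor and the Euler product are both controlled by the primes of $n$, and there is no mechanism to trade $\phi(n)$ for $\phi(m/n)$. Your parenthetical ``with the subscript $n$ matched to the companion parameter $r$ via $r=m/n$'' does not repair this, because under the paper's convention $x_{(y)}=x/\gcd(x,y)$ the quantities $t_{(n)}$ and $t_{(r)}$ are genuinely different. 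In fact a quick check (e.g.\ $R=\mathbb{Z}$, $m=6$, $n=3$) shows that the displayed formula of Lemma~\ref{Re8.2.4}, read literally, is the indicator of elements of order $m/n$, not of order $n$; it becomes the correct expression for $\Lambda_n$ once $n$ and $m/n$ are interchanged. (Note that the hypothesis introduces $r=m/n$ but the display never uses $r$, which already signals a slip.) Rather than glossing over the mismatch, you should either correct the target to
\[
\Lambda_n(\omega)\;=\;\frac{\phi(n)}{|\M|}\sum_{t\mid m}\frac{\mu(t_{(r)})}{\phi(t_{(r)})}\sum_{\mathrm{ord}(\chi)=t}\chi(\omega),\qquad r=m/n,
\]
or state explicitly that the printed version has $n$ and $m/n$ transposed.
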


\subsection{Generalised $(r,\, n)$-free elements}

Following is the definition of Generalised $(r,\, n)$-free elements.

\begin{definition}  Let $r$ and $n$ be two divisors of $m$. An element $x \in \M$ is called $(r,\, n)$-free if $ ord(a) | \frac{m}{\mathrm{ gcd}(r,\, n)}$ and if $x = d \circ y$ for some $y \in \M$ with $ord(y) | \frac{m}{\mathrm{ gcd}(r,\, n)}$ and $d| r_{(n)}$, implies $d=1$.
\end{definition}

\begin{lemma}\label{Re8.2.5} Let $r,\, n$ be two divisors of $m$. Let $a\in \M$ be such that $ ord(a) | \frac{m}{\mathrm{ gcd}(r,\, n)}$. Then $a$ is $(r,\,n)$-free if and only if $\mathrm{ gcd}(r,\, n)= \mathrm{ gcd} (r,\, \frac{m}{ord(a)})$.
\end{lemma}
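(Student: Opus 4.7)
The plan is to work coordinate-wise: fix a generator $g$ of $\M$ with $\mathrm{ord}(g)=m$, set $s=\mathrm{gcd}(r,n)$ and $M=m/s$, and write $a = k \circ g$ for some $k\in R$ (unique modulo $m$). Since $\mathrm{ord}(a) = m/\mathrm{gcd}(k,m)$, the hypothesis $\mathrm{ord}(a)\mid M$ is equivalent to $s\mid k$, and I would write $k = s k'$ for some $k' \in R$ once and for all. The target identity $\mathrm{gcd}(r,n) = \mathrm{gcd}(r, m/\mathrm{ord}(a))$ will then be encoded entirely in terms of $k'$.

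The central computation is a translation of the factorisation condition. Suppose $a = d\circ y$ with $y = j\circ g$ and $\mathrm{ord}(y)\mid M$; the latter is again equivalent to $s\mid j$. Hence such a factorisation exists iff the congruence $k \equiv ds j' \pmod{m}$ can be solved in $R$, which by the usual Euclidean-domain argument happens iff $\mathrm{gcd}(ds,m)\mid k$. Using $s\mid m$ to write $\mathrm{gcd}(ds,m) = s\cdot\mathrm{gcd}(d,M)$, and then $r_{(n)}\mid M$ (which holds because $r\mid m$ and $s\mid r$) to get $\mathrm{gcd}(d,M)=d$ for every $d\mid r_{(n)}$, the condition reduces cleanly to $d\mid k'$. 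Therefore $a$ is $(r,n)$-free iff no non-unit divisor of $r_{(n)}$ divides $k'$, i.e.\ iff $\mathrm{gcd}(r_{(n)},k')=1$.

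Finally I would convert this equality back to the form in the statement. From $k = sk'$ and $m = sM$ one gets $\mathrm{gcd}(k,m) = s\cdot\mathrm{gcd}(k',M)$, so $m/\mathrm{ord}(a) = s\cdot\mathrm{gcd}(k',M)$. Using $r = s\,r_{(n)}$ and $r_{(n)}\mid M$, a short gcd manipulation gives
\begin{equation*}
\mathrm{gcd}(r,\,m/\mathrm{ord}(a)) \;=\; s\cdot\mathrm{gcd}\!\bigl(r_{(n)},\,\mathrm{gcd}(k',M)\bigr) \;=\; s\cdot\mathrm{gcd}(r_{(n)},k').
\end{equation*}
So $\mathrm{gcd}(r, m/\mathrm{ord}(a)) = s$ iff $\mathrm{gcd}(r_{(n)},k')=1$, which closes the equivalence with $(r,n)$-freeness.

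The main obstacle is bookkeeping rather than ideas: in an Euclidean domain, gcds and $\mathrm{ord}$ are defined only up to units, so I have to make sure the equivalences $s\mid k$, $d\mid k'$, and $\mathrm{gcd}(r_{(n)},k')=1$ really are properties of residue classes, and that the identity $\mathrm{gcd}(ds,m)=s\cdot\mathrm{gcd}(d,M)$ and the reduction $\mathrm{gcd}(d,M)=d$ for $d\mid r_{(n)}$ are applied only after checking the relevant divisibility $s\mid m$ and $r_{(n)}\mid M$. Once these are set up correctly, the argument is a compact chain of gcd identities and mirrors the proof of \cite[Lemma 3.3]{CGL} in the multiplicative case.
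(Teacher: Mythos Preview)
Your argument is correct and rests on the same core idea as the paper's proof: pass to coordinates relative to a cyclic generator, translate $(r,n)$-freeness into a gcd condition on the coordinate, and identify that condition with $\gcd(r,\,m/\mathrm{ord}(a))=\gcd(r,n)$. The paper organises this a little differently: rather than using the global generator $g$ and factoring $s=\gcd(r,n)$ out of $k$ afterwards, it picks from the outset an element $b$ of order $M=m/s$, writes $a=k\circ b$ (so the submodule hypothesis is built in), and then proves the two implications separately---the forward direction by contradiction (extracting a non-unit factor $k_0\mid r_{(n)}$ of $k$), the reverse directly---landing on the criterion $\gcd(r_{(n)},k,M)=1$. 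Since your $k'$ is exactly the paper's $k$ (via $b=s\circ g$), and $r_{(n)}\mid M$ makes $\gcd(r_{(n)},k',M)=\gcd(r_{(n)},k')$, the two criteria coincide. Your single chain of equivalences is tidier and avoids the case split; the paper's choice of $b$ spares the initial ``$s\mid k$'' bookkeeping. Neither route uses anything beyond solvability of linear congruences and the identity $\gcd(ds,sM)=s\,\gcd(d,M)$ in an Euclidean domain.
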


\begin{proof}

 Clearly it suffices to prove that $\mathrm{gcd} \left( r_{(n)}, \frac{m}{\mathrm{gcd} (r,\, n) ord(a)}\right)=1$. We take $b\in \M$ of order $\frac{m}{\mathrm{gcd}(r,\, m)}$. Then $a= k \circ b$ for some $k \in R$, while $ord (a) = \left(\frac{m}{\mathrm{gcd}(r,\, n) \mathrm{gcd}\left(k, \frac{m}{\mathrm{gcd}(r,\, n)}\right)} \right).$

($\Rightarrow$) Let $a$ be  $(r,\, n)$-free such that $\mathrm{gcd} \left( r_{(n)}, \frac{m}{\mathrm{gcd} (r,\, n) ord(a)}\right)\neq 1$. 

Then $\mathrm{gcd} \left( r_{(n)}, \mathrm{gcd}\left(k, \frac{m}{\mathrm{gcd}(r,\, n)}\right) \right)\neq 1 $ i.e., $\mathrm{gcd} \left( r_{(n)}, k, \frac{m}{\mathrm{gcd}(r,\, n)} \right)\neq 	1,$ i.e. $k=k_0k_1,$ for some $k_0\in R$ such that $k_0\neq 1$ and $k_0\, | \, \mathrm{gcd} \left( r_{(n)}, \frac{m}{\mathrm{gcd}(r,\, n)}\right).$

Thus $a= k \circ b = (k_0k_1)\circ b =k_0\circ (k_1 \circ b).$ Since $a$ is $(r,\,n)$-free, this implies $k_0=1,$ a contradiction.

($\Leftarrow$)  Let $ \mathrm{gcd} \left( r_{(n)}, \frac{m}{\mathrm{gcd} (r,\, n) ord(a)}\right)=1$. Take some $\beta \in \M$ such that $ord(\beta)| \frac{m}{\mathrm{gcd}(r,\,n)},$ $a=d\circ \beta$ and $d|r_{(n)}.$ It suffices to show that $d=1$.

First we have  $\beta= i\circ b$, for some $i\in R$. It follows that $a= k \circ b= d\circ (i \circ b)= (di)\circ b$. Now $ ord(b)= \frac{m}{\mathrm{gcd}(r,\,n)}$ and $k\circ b= (di)\circ b$, where $b\in \M$ and $k, di \in R.$ Since $R$ is ED,  there exists $t\in R$ such that $k = di + t\frac{m}{\mathrm{gcd}(r,n)}$. Since $d|di$ and $d|\frac{m}{\mathrm{gcd}(r,\,n)}$, hence $d|k$. We have $ \mathrm{gcd} \left( r_{(n)}, \frac{m}{\mathrm{gcd} (r,\, n) ord(a)}\right)=1$ i.e. $\mathrm{gcd}\left( r_{(n)}, k, \frac{m}{\mathrm{gcd}(r,\,n)}\right)=1$ and combining the facts that $d|k$ and $d|\frac{m}{\mathrm{gcd}(r,\, n)}$ we get $d=1$.

\end{proof} 

\begin{lemma}\label{Re8.2.6}
Let $t| m$ and $x\in \M$. Then

\begin{center}
$\underset{ord(\chi)| t}{\sum} \chi(x) =\begin{cases}
                         |(R/t R)|, \,\mbox{ if }\, t|\frac{m}{ord(x)},\\
                         0, \,\mbox{ otherwise}. 
                         \end{cases} $ 
\end{center} 
                         
\end{lemma}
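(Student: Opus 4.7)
The plan is to apply standard character orthogonality to the subgroup $H := \{\chi \in \widehat{\M} : \mathrm{ord}(\chi) \mid t\}$ of the dual module. First I reinterpret $H$: unwinding the $R$-action on $\widehat{\M}$, the condition $t \circ \chi = \chi_0$ (where $\chi_0$ is the trivial character) is equivalent to $\chi(t \circ \alpha) = 1$ for every $\alpha \in \M$, i.e.\ $\chi$ vanishes on the subgroup $t \circ \M$ of $\M$. Hence $H$ is naturally identified with $\widehat{\M/(t \circ \M)}$.

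Character orthogonality applied to the finite abelian group $\M/(t \circ \M)$ then yields
\[
\sum_{\mathrm{ord}(\chi) \mid t} \chi(x) = \begin{cases} |H|, & \text{if } x \in t \circ \M, \\ 0, & \text{otherwise,} \end{cases}
\]
so the lemma reduces to (a) computing $|H|$ and (b) showing that $x \in t \circ \M$ iff $t \mid m/\mathrm{ord}(x)$.

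For (a), since $\M$ is cyclic with generator $g$ of order $m$, the submodule $t \circ \M$ is cyclic with generator $t \circ g$, whose annihilator ideal is generated by $m/\gcd(m,t) = m/t$ (using $t \mid m$); hence $|t \circ \M| = |(R/(m/t)R)|$. Combined with the multiplicativity $|R/mR| = |R/tR| \cdot |R/(m/t)R|$, which follows from the short exact sequence $0 \to tR/mR \to R/mR \to R/tR \to 0$ together with the isomorphism $tR/mR \cong R/(m/t)R$, this gives $|H| = |\M|/|t \circ \M| = |(R/tR)|$, matching the non-zero value in the claim.

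For (b), write $x = k \circ g$; then $m/\mathrm{ord}(x) = \gcd(m,k)$. Membership $x \in t \circ \M$ amounts to $k \circ g = (tl) \circ g$ for some $l \in R$, i.e.\ $k \equiv tl \pmod{m}$, which (using $t \mid m$) is equivalent to $t \mid k$; and this in turn coincides with $t \mid \gcd(m,k) = m/\mathrm{ord}(x)$. The one delicate point in the whole argument is the multiplicativity of $|R/(\cdot)R|$ invoked in (a), since in a general Euclidean domain (unlike $R=\mathbb{Z}$) this cardinality is not literally the element itself; the short exact sequence handles it cleanly, and the rest is routine bookkeeping.
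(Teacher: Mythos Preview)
Your argument is correct and is precisely the orthogonality computation the paper alludes to: the paper's own ``proof'' is the single sentence ``This can be proved from orthogonality relations,'' and you have simply carried that out in full detail, including the careful bookkeeping for $|R/tR|$ in a general Euclidean domain. There is nothing to add or correct.
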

This can be proved from orthogonality relations.
Finally, we define the characteristic function for the set of $(r,\,  n)$-free
elements in $\M$ by
\begin{equation*}
\Lambda_{r, n}(x)= \frac{\phi(r_{(n)})}{|(R/ rR)|} \underset{t|r}{\sum} \frac{\mu(t_{(n)})}{\phi (t_{(n)})} \underset{ord(\chi)=t}{\sum} \chi (x), x\in \M, 
\end{equation*}

as character sum expression.

We prove the following lemma to support our claim.

\begin{lemma}\label{Re8.2.7} Let $x \in \M$, then
\begin{center}
$\Lambda_{r, n}(x)=\begin{cases}
                         1, \,\mbox{ if } x \mbox{ is } (r,\,  n)\mbox{-free},\\
                         0, \,\mbox{ otherwise}. 
                         \end{cases} $ 
\end{center}
\end{lemma}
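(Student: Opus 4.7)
The plan is to compute $\Lambda_{r,n}(x)$ directly by simplifying the inner character sum via M\"obius inversion and then factoring the result over the primes of $R$ dividing $r$. Setting $N:=m/\mathrm{ord}(x)$, Lemma~\ref{Re8.2.6} gives
\[
\sum_{\mathrm{ord}(\chi)\mid s}\chi(x)=\begin{cases}|R/sR|,&\text{if }s\mid N,\\ 0,&\text{otherwise,}\end{cases}
\]
so M\"obius inversion on the character group yields
\[
\sum_{\mathrm{ord}(\chi)=t}\chi(x)=\sum_{s\mid t}\mu(t/s)\sum_{\mathrm{ord}(\chi)\mid s}\chi(x)=\sum_{s\mid\gcd(t,N)}\mu(t/s)\,|R/sR|.
\]
Plugging this into the definition of $\Lambda_{r,n}(x)$ and swapping the order of summation, I would obtain
\[
\Lambda_{r,n}(x)=\frac{\phi(r_{(n)})}{|R/rR|}\sum_{s\mid\gcd(r,N)}|R/sR|\sum_{\substack{t\mid r\\ s\mid t}}\frac{\mu(t_{(n)})\,\mu(t/s)}{\phi(t_{(n)})}.
\]

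The second step is to exploit that $\mu$, $\phi$, and $|R/{\cdot}R|$ are multiplicative on the prime factorisation of $R$, so the entire expression factors as a product indexed by the primes $p\mid r$. For each such $p$, let $p^a$, $p^b$, $p^c$ denote the $p$-parts of $r$, $n$, $N$ respectively. A direct case analysis modelled on cases (i)--(iii) of the proof of Lemma~\ref{Re8.2.2} (refined by the size of $c$) shows that the local factor equals $|R/p^aR|/\phi(p^{\max(0,a-b)})$ when $\min(a,c)=\min(a,b)$ and vanishes otherwise. Taking the product over all primes $p$ and combining with the prefactor $\phi(r_{(n)})/|R/rR|$ gives $\Lambda_{r,n}(x)=1$ when $\gcd(r,N)=\gcd(r,n)$ and $\Lambda_{r,n}(x)=0$ otherwise. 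Lemma~\ref{Re8.2.5} identifies the former condition with $x$ being $(r,n)$-free, finishing the argument.

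The main obstacle is the case-by-case bookkeeping in the local identity: one must track, for each triple $(a,b,c)$, which terms $t=p^i$ survive the squarefreeness restrictions imposed by $\mu(t_{(n)})$ and $\mu(t/s)$, and then verify that the surviving contributions telescope against the $\phi$-denominators either to the claimed non-zero value or to zero. The Euclidean-domain hypothesis enters only through the multiplicativity of the arithmetic functions involved, so once the prime-power identity is secured the passage to general $r$ is automatic.
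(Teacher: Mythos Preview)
Your approach is correct and closely parallels the paper's, differing mainly in the order of two reductions. The paper first invokes multiplicativity to write $\Lambda_{r,n}(\omega)=\prod_i\Lambda_{p_i^{e_i},n}(\omega)$ and then, for each prime power $r=p^s$, uses that $\mu(p^i_{(n)})$ vanishes once $i>b+1$ (with $p^b\Vert n$) to collapse the sum over $t\mid p^s$ to at most two surviving character sums, which are evaluated directly by Lemma~\ref{Re8.2.6}; no explicit M\"obius inversion of the inner sum is needed. You instead first globally replace $\sum_{\mathrm{ord}(\chi)=t}\chi(x)$ by its M\"obius-inverted expression in terms of $N=m/\mathrm{ord}(x)$, swap sums, and only then factor the resulting double sum over primes. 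Both routes land on the same prime-local case analysis in the exponents $(a,b,c)$ and the same endgame via Lemma~\ref{Re8.2.5}; the paper's version is slightly leaner because it avoids carrying the extra $\mu(t/s)$ factor, while yours makes the passage to the product over primes more explicit (the paper asserts the multiplicativity of $\Lambda_{r,n}$ in $r$ without argument). One small point to tighten: your final appeal to Lemma~\ref{Re8.2.5} is stated under the hypothesis $\mathrm{ord}(x)\mid m/\gcd(r,n)$, so you should note that $\gcd(r,N)=\gcd(r,n)$ already forces $\gcd(r,n)\mid N$, hence that hypothesis is automatic in the non-vanishing case.
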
 

\begin{proof}

 Let $r=p_1^{e_1}p_2^{e_2}\ldots p_k^{e_k}$ be a prime factorisation of $r$ in $R$ and exponents $e_i\neq 0$. It follows from the definition that $\omega$ is $(r,\, n)$-free if and only if  it is $(p_i^{e_i},\, n)$-free for each $i=1,2, \ldots , k$. Moreover we have $\Lambda_{e_1e_2,n}(\omega)= \Lambda_{e_1,n}(\omega)\Lambda_{e_2,n}(\omega).$ It follows that, it suffices to restrict our case to $r= p^s$, where $p$ is irreducible over $R$ and $s\neq 0$.

 Consider the case $p^s\nmid n$, then $\mathrm{gcd}(p^s, n)= p^t$, for some $0\leq t < s.$  So $p^s_{(n)}=p^{s-t}$. 

$$\Lambda_{p^s,n}(\omega)= \frac{\phi(p^s_{(n)})}{|(R/p^s R)|}\underset{i=0}{\overset{s}{\sum}} \frac{\mu (p^i_{(n)})}{\phi(p^i_{(n)})}\underset{ord(\psi)=p^i}{\sum}\psi(\omega).$$

Now\begin{align*}
\underset{i=0}{\overset{s}{\sum}} \frac{\mu (p^i_{(n)})}{\phi(p^i_{(n)})}\underset{ord(\psi)=p^i}{\sum}\psi(\omega) =&
 \underset{i=0}{\overset{t}{\sum}} \frac{\mu (p^i_{(n)})}{\phi(p^i_{(n)})}\underset{ord(\psi)=p^i}{\sum}\psi(\omega) +
  \frac{\mu (p^{t+1}_{(n)})}{\phi(p^{t+1}_{(n)})}\underset{ord(\psi)=p^{t+1}}{\sum}\psi(\omega)\\
  =&  \underset{i=0}{\overset{t}{\sum}} \underset{ord(\psi)=p^i}{\sum}\psi(\omega)- \frac{1}{\phi(p)}\underset{ord(\phi)=p^{t+1}}{\sum}\psi(\omega)\\
=& \underset{ord(\psi)|p^t}{\sum}\psi(\omega)- \frac{1}{\phi(p)}\underset{ord(\psi)=p^{t+1}}{\sum}\psi(\omega).
\end{align*} 
Hence
\begin{equation}\label{eq8.2.1}
\Lambda_{p^s,n}(\omega)= \frac{\phi(p^s_{(n)})}{|(R/p^s R)|} \left( \underset{ord(\psi)|p^t}{\sum}\psi(\omega)- \frac{1}{\phi(p)}\underset{ord(\psi)=p^{t+1}}{\sum}\psi(\omega)\right).
\end{equation} 

Now, let $\nu_p(g)$ stands for the exponent of the prime $p$ in the prime factorisation of $g$. Now $\omega$ is $(p^s,\,  n)$-free if and only if $t= \nu_p(m)- \nu_p(ord(\omega))$.

If $\omega$ is $(p^s,\, n)$-free, then the above equality and Lemma~\ref{Re8.2.6} imply that

 $\underset{ord(\chi)|p^t}{\sum}\chi(\omega)=\left| \left( R/p^s R\right) \right|$ and $\underset{ord(\chi)|p^{t+1}}{\sum}\chi(\omega)=0$, which yields 
 $\underset{ord(\chi)=p^{t+1}}{\sum}\chi(\omega)=- \left| \left( R/p^s R\right) \right| $.

 Now, from the Equation~\eqref{eq8.2.1}, we have 
 $\Lambda_{p^s,n}(\omega)=1$.
 
 Next assume that $\omega$ is not $(p^s,\, n)$-free. This means $t > \nu_p(m)- \nu_p(ord(\omega))$ or $t < \nu_p(m)- \nu_p(ord(\omega))$. 
 
 Now, for the case $t> \nu_p(m)- \nu_p(ord(\omega))$, we have $p^t\nmid \frac{m}{ord(\omega)}$. By Lemma~\ref{Re8.2.6}, we have $\underset{ord(\chi)|p^t}{\sum}\chi(\omega)= \underset{ord(\chi)=p^{t+1}}{\sum}\chi(\omega)=0. $ Hence from Equation~\eqref{eq8.2.1}, $\Lambda_{p^s,n}=0.$
 
 Finally if $t < \nu_p(m)- \nu_p(ord(\omega))$, by Lemma~\ref{Re8.2.6},  we have $\underset{ord(\chi)|p^t}{\sum}\chi(\omega)= \left| \left( R/p^t R\right) \right|$ and $\underset{ord(\chi)=p^{t+1}}{\sum}\chi(\omega)=\phi(p^{t+1})= \left| \left( R/p^t R\right) \right| \left(1- \frac{1}{\left| \left( R/p R\right) \right|}\right) $.
 
 Now, we discuss the case $p^s|n$. In this case $\mathrm{gcd}(p^s,n)=p^s$ and $p^s_{(n)}=1$. Therefore
 \begin{equation}
 \Lambda_{p^s,n}(\omega)= \frac{\phi(p^s_{(n)})}{\left| \left( R/p^t R\right) \right|} \underset{i=0}{\overset{s}{\sum}} \frac{\mu(p^i_{(n)})}{\phi(p^i_{(n)})}\underset{ord(\chi)=p^i}{\sum}\chi(\omega)= \frac{1}{\left| \left( R/p^t R\right) \right|} \underset{ord(\chi)|p^s}{\sum} \chi(\omega).
 \end{equation}
In this case $\omega$ is $(p^s,\, n)$-free if and only if $p^s|\frac{m}{ord(\omega)}$. Along with this and from Lemma~\ref{Re8.2.6}, we have that $\Lambda_{p^s,n}(\omega)=1$ if $\omega$ is $(p^s,n)$-free and $\Lambda_{p^s,n}(\omega)=0$, otherwise.

\end{proof}

\section{Additive module}\label{sec8.3}  

The additive group of $\Fm$ can be viewed as an $\F[x]$-module $ \mathfrak{F} \, o\,  x \, := \, \underset{i=0}{\overset{n}{\sum}} F_i x^{q^i}$, where $\mathfrak{F}(X)= \underset{i=0}{\overset{n}{\sum}} F_i X^i  \in \F[X]$. From the   normal basis  theorem, it is clear that the $\F[x]$-module is cyclic, and normal elements or free elements are generators of the module. 

For $\alpha \in \Fm$, the unique  monic polynomial polynomial $g$ of the least degree dividing $x^m-1$ is said to be $\F$-order of $\alpha$ if $g \circ \alpha =0$.
Throughout this section, we denote $Ord(\alpha)$ as the $\F$-order of $\alpha \in \Fm$. For any element $\alpha$ in some extension of $\F$,  $(x^m-1)\circ \alpha= \alpha^{q^m}-\alpha=0$ if and only if $\alpha\in \Fm$. If we set $I_\alpha= \{ g(x)\in \F[x]    \mid  g\circ \alpha=0 \}$, then $I_\alpha$ is an ideal of $\F[x]$ and hence generated by a polynomial, say $h_\alpha(x)$ (we require $h_\alpha(x)$ to be monic).  Then $h_\alpha(x)$ is the $\F$-order of $\alpha$. It is clear that $Ord(\alpha) \in \F[x]$ and $Ord(\alpha) | x^m-1$.


For $f,g\in \F[x]$, we denote $f_{(g)}$ as
 $$f_{(g)}=\frac{f}{\mathrm{gcd}(f,\, g)}.$$

If $f\in \F[x]$, $f=\underset{i=0}{\overset{s}{\sum}}a_ix^i$, we define $L_f(x)= \underset{i=0}{\overset{s}{\sum}}a_ix^{q^i} $ as $q$-associate of $f$. Also for $\alpha\in \Fm$, let $f\circ \alpha= L_f(\alpha)= \underset{i=0}{\overset{s}{\sum}}a_i\alpha^{q^i}$. For $f,g \in \F[x]$, the following hold.
\begin{itemize}
\item[(i)]$L_f(L_g(x))= L_{fg}(x).$
\item[(ii)] $L_f(x) + L_g(x)= L_{f+g}(x).$
\end{itemize}

\begin{lemma}\label{Re8.3.1}
Let $f\in \F[x]$ and $g\in \F[x]$ be a divisor of $x^m-1$. An element $\alpha\in\Fm$ is of the form $f\circ \beta$ with $ Ord(\beta)=g$ if and only if  $ Ord(\alpha)=g_{(h)}$, where $h= \mathrm{ gcd} (f, x^m-1)$.
\end{lemma}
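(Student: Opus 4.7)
The plan is to mirror the proof of Lemma~\ref{Re8.2.1} in the additive $\F[x]$-module setting; since $\F[x]$ is a Euclidean domain, the multiplicative counting arguments (Euler $\phi$, CRT, counting of module generators) translate without change.

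For the forward direction, I would suppose $\alpha=f\circ\beta$ with $Ord(\beta)=g$ and compute the annihilator of $\alpha$ directly. For $p\in\F[x]$ the identity $p\circ\alpha=(pf)\circ\beta$ shows $p\circ\alpha=0$ iff $g\mid pf$, iff $g_{(f)}\mid p$, so $Ord(\alpha)=g_{(f)}$. To reconcile this with the claimed value, I would use $g\mid x^m-1$ to rewrite
\[\gcd(g,f)=\gcd(g,f,x^m-1)=\gcd(g,\gcd(f,x^m-1))=\gcd(g,h),\]
which gives $g_{(f)}=g_{(h)}$, as required.

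For the converse direction I would reproduce the counting strategy of Lemma~\ref{Re8.2.1}. Setting
\[A_g=\{f\circ\beta:\beta\in\Fm,\ Ord(\beta)=g\},\]
fix $\delta\in\Fm$ with $Ord(\delta)=g$. Every element of $\F$-order $g$ equals $i\circ\delta$ for some $i\in\F[x]$ with $\gcd(i,g)=1$ (unique mod $g$), so the elements of $A_g$ are precisely $(fi)\circ\delta$ for such $i$. Two such expressions coincide iff $g\mid f(i-j)$, iff $i\equiv j\pmod{g_{(f)}}$, so $|A_g|$ equals the number of residue classes mod $g_{(f)}$ that admit a representative coprime to $g$. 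By the forward direction, $A_g$ is contained in the set of elements of $\F$-order $g_{(h)}$, which has cardinality $\phi(g_{(h)})$ (the number of generators of the unique order-$g_{(h)}$ submodule of the cyclic $\F[x]$-module $\Fm$). If I can show $|A_g|=\phi(g_{(f)})=\phi(g_{(h)})$, the two sets must coincide, which means every $\alpha$ with $Ord(\alpha)=g_{(h)}$ lies in $A_g$, as needed.

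The main obstacle is justifying the count $|A_g|=\phi(g_{(f)})$, which amounts to proving the reduction map $(\F[x]/g)^\ast\to(\F[x]/g_{(f)})^\ast$ is surjective. Because $g_{(f)}\mid g$ this map is well defined on units, and surjectivity follows by a standard CRT lift over $\F[x]$: given any residue $\bar\imath\in(\F[x]/g_{(f)})^\ast$, adjust any lift by a suitable multiple of $g_{(f)}$, one irreducible factor of $g/g_{(f)}$ at a time, to clear any unwanted common factor with $g$. This is the verbatim $\F[x]$-analogue of the integer argument implicit in Lemma~\ref{Re8.2.1}, so once this lifting is recorded, the remainder of the proof is routine.
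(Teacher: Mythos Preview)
Your proposal is correct and follows exactly the approach the paper intends: the paper's own proof simply reads ``Proof of this lemma is similar to that of Lemma~\ref{Re8.2.1}'', and what you have written is precisely the $\F[x]$-translation of that argument, with the same forward computation of the annihilator and the same counting of $|A_g|$ via representatives $i\circ\delta$ modulo $g_{(f)}$. If anything, you are more careful than the paper, since you spell out both the identity $g_{(f)}=g_{(h)}$ (which the paper leaves implicit in Lemma~\ref{Re8.2.1}) and the surjectivity of $(\F[x]/g)^\ast\to(\F[x]/g_{(f)})^\ast$ that underlies the claim ``since $b_{(n)}$ divides $b$, hence cardinality is $\phi(b_{(n)})$''.
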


Proof of this lemma is similar to that of Lemma~\ref{Re8.2.1}.

\begin{lemma}\label{Re8.3.2}
Let $f,g \in \Fm[x]$, then we have 
$$T(f,g):= \underset{t|f}{\sum}\frac{|\mu(t_{(g)})|}{\Phi(t_{(g)}) }. \Phi(t)= \left| \frac{\F[x]}{l\F[x]}\right|. W\left( \left| \frac{\F[x]}{h\F[x]}\right|\right) = \left(q^{deg(l)}\right). W\left( q^{deg(h)}\right),$$
where $l=\mathrm{ gcd}(f, g)$ and $h=\mathrm{ gcd}(f, f_{(g)})$, $W(n)$ denotes the number of square free divisors of $n$.
\end{lemma}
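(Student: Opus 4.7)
The plan is to imitate the proof of Lemma~\ref{Re8.2.2}, now working in the polynomial ring $\F[x]$. Introduce
\[
F_g(t) \;:=\; \frac{|\mu(t_{(g)})|}{\Phi(t_{(g)})}\,\Phi(t), \qquad G_g(f) \;:=\; q^{\deg(l)}\,W\bigl(q^{\deg(h)}\bigr),
\]
so that $T(f,g) = \sum_{t \mid f} F_g(t)$. The first step is to observe that $F_g$ is a multiplicative function of $t$: if $t = t_1 t_2$ with $\gcd(t_1, t_2) = 1$, then $t_{(g)} = (t_1)_{(g)}(t_2)_{(g)}$ with coprime factors, and multiplicativity follows from that of $\mu$ and $\Phi$ on $\F[x]$. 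Hence $T(\cdot, g)$ is multiplicative in its first argument, and a parallel check shows $G_g$ is multiplicative on the prime-power components of $f$. The identity therefore reduces to the case $f = p^s$ for a monic irreducible $p \in \F[x]$ and $s \geq 1$.

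Write $g = p^b g_0$ with $\gcd(p, g_0) = 1$ and $b \geq 0$ an integer, and split into three cases mirroring the analysis of Lemma~\ref{Re8.2.2}. If $b = 0$, then $l = 1$, $h = p^s$, and $p^i_{(g)} = p^i$ for every $i$, so the sum collapses (via $\mu(p^i) = 0$ for $i \geq 2$) to $|\mu(1)| + |\mu(p)| = 2$, matching $G_g(p^s) = q^{0}\cdot W(q^{s\deg p}) = 2$. If $0 < b \leq s$, then $l = p^b$, $f_{(g)} = p^{s-b}$, and $h = p^{s-b}$ when $b < s$ or $h = 1$ when $b = s$; the terms indexed by $i \leq b$ contribute $\sum_{i=0}^{b}\Phi(p^i) = q^{b\deg p}$ by telescoping, and (only when $b < s$) the single remaining non-zero term at $i = b+1$ contributes $\Phi(p^{b+1})/\Phi(p) = q^{b\deg p}$, giving totals $2q^{b\deg p}$ or $q^{b\deg p}$ that match $G_g(p^s)$ in each subcase. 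If $b > s$, then every $p^i_{(g)} = 1$ for $i \leq s$, $l = p^s$, $h = 1$, and $T(p^s, g) = \sum_{i=0}^{s}\Phi(p^i) = q^{s\deg p}$, equal to $G_g(p^s)$.

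The computational engine driving each case is the telescoping identity $\sum_{i=0}^{k}\Phi(p^i) = q^{k\deg p}$, itself a quick consequence of $\Phi(p^i) = q^{i\deg p} - q^{(i-1)\deg p}$ for $i \geq 1$. The main bookkeeping nuisance, rather than a genuine obstacle, lies in the boundary subcase $b = s$ of case (ii): here the anticipated cancelling term at $i = b+1$ falls outside the range of summation while simultaneously $W(q^{\deg h})$ drops from $2$ to $1$, and one must verify that these two changes are compatible so that the identity is maintained. Apart from this cosmetic point, every step is a straightforward polynomial transcription of the corresponding step in Lemma~\ref{Re8.2.2}.
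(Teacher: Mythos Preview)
Your approach is exactly what the paper intends: its entire proof is the single line ``similar to that of Lemma~\ref{Re8.2.2}'', and you have faithfully transcribed that argument into $\F[x]$, with the same reduction by multiplicativity, the same three-way case split on the $p$-adic valuation of $g$, and the same telescoping identity $\sum_{i=0}^{k}\Phi(p^{i})=q^{k\deg p}$.

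There is, however, a genuine gap at the step where you assert that ``a parallel check shows $G_g$ is multiplicative''. This check fails. If $f=f_1f_2$ with $\gcd(f_1,f_2)=1$, then indeed $h(f)=h(f_1)h(f_2)$, but the integers $q^{\deg h(f_1)}$ and $q^{\deg h(f_2)}$ are powers of the same rational prime and hence not coprime, so $W$ does not split over their product. Concretely, take $q=2$, $g=1$, and $f=x(x+1)$: then $l=1$, $h=f$, and the claimed right-hand side is $q^{0}\cdot W(2^{2})=W(4)=2$, whereas direct computation gives $T(f,1)=\sum_{t\mid f}|\mu(t)|=4$. The identity as written is therefore false; the intended right-hand side should carry $W$ applied to $h$ \emph{as an element of $\F[x]$} (counting its square-free divisors in the ring), not to the integer $q^{\deg h}$. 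The same defect is already latent in Lemma~\ref{Re8.2.2} for a general Euclidean domain and is invisible over $\mathbb{Z}$ only because there $|(R/\beta R)|=\beta$; your argument inherits it from the template you were asked to imitate.
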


Proof of this lemma is similar to that of Lemma~\ref{Re8.2.2}.

\section{ $(f,\,  g)$-free element}\label{sec8.4}
\begin{definition}\label{Def8.4.1}
Let $f, g\in \F[x]$ be divisors of $x^m-1$. An element $\alpha\in\Fm$ is called $(f,\, g)$-free if $ Ord(\alpha) \mid \frac{x^m-1}{\mathrm{gcd}(f,\, g)}$ and $\alpha= h\circ \beta$, for some $\beta\in \Fm$ with $ Ord(\beta)\mid \frac{x^m-1}{\mathrm{ gcd}(f,\,  g)}$ and $h|f_{(g)}$, imply $h=1$.
\end{definition}

Then we have the following.

\begin{lemma}\label{Re8.4.1}
Let $f,\, g \in \F[x]$ be divisors of $x^m-1$. Let $\alpha\in \Fm$ be such that $ Ord(\alpha)\mid \frac{x^m-1}{\mathrm{ gcd} (f,\, g)}$. Then $\alpha$ is $(f,\,  g)$-free if and only if $\mathrm{ gcd}(f,\,  g) = \mathrm{ gcd} (f,\,  \frac{x^m-1}{Ord(\alpha)})$.
\end{lemma}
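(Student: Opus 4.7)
The plan is to adapt the proof of Lemma~\ref{Re8.2.5} to the $\F[x]$-module structure on $\Fm$, with $\F[x]$ playing the role of the Euclidean domain $R$ and $x^m-1$ playing the role of the module generator's order. First, observe that the asserted equality $\gcd(f,g) = \gcd(f,\tfrac{x^m-1}{Ord(\alpha)})$ is equivalent to the single coprimality statement
$$\gcd\!\left(f_{(g)},\;\tfrac{x^m-1}{\gcd(f,g)\,Ord(\alpha)}\right) = 1,$$
obtained by dividing out the common factor $\gcd(f,g)$ from both arguments. So it suffices to prove that $\alpha$ is $(f,g)$-free if and only if this latter coprimality holds.

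Using the cyclicity of $\Fm$ as an $\F[x]$-module and the hypothesis that $\gcd(f,g)$ divides $x^m - 1$, fix $\beta_0 \in \Fm$ with $Ord(\beta_0) = \tfrac{x^m-1}{\gcd(f,g)}$. Since $Ord(\alpha)$ divides $Ord(\beta_0)$, write $\alpha = k \circ \beta_0$ for some $k \in \F[x]$; Lemma~\ref{Re8.3.1}, applied with its $g$ replaced by $Ord(\beta_0)$, then yields
$$Ord(\alpha) \;=\; \frac{(x^m-1)/\gcd(f,g)}{\gcd\!\bigl(k,\,(x^m-1)/\gcd(f,g)\bigr)}.$$

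For the forward direction, assume $\alpha$ is $(f,g)$-free and suppose for contradiction that $\gcd\!\bigl(f_{(g)},\gcd(k,\tfrac{x^m-1}{\gcd(f,g)})\bigr)$ is a non-unit; factor $k = k_0 k_1$ with $k_0 \neq 1$ a divisor of $\gcd\!\bigl(f_{(g)},\tfrac{x^m-1}{\gcd(f,g)}\bigr)$, so that $\alpha = k_0 \circ (k_1 \circ \beta_0)$ with $k_1 \circ \beta_0$ of $\F$-order dividing $\tfrac{x^m-1}{\gcd(f,g)}$ and $k_0 \mid f_{(g)}$, contradicting Definition~\ref{Def8.4.1}. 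Conversely, suppose the coprimality holds and $\alpha = h \circ \beta$ with $h \mid f_{(g)}$ and $Ord(\beta) \mid \tfrac{x^m-1}{\gcd(f,g)}$; writing $\beta = i \circ \beta_0$ and comparing $\alpha = (hi) \circ \beta_0 = k \circ \beta_0$ produces, via the division algorithm in $\F[x]$, some $t$ with $k = hi + t\,\tfrac{x^m-1}{\gcd(f,g)}$. Since $f \mid x^m-1$ implies $f_{(g)} \mid \tfrac{x^m-1}{\gcd(f,g)}$ and $h \mid f_{(g)}$, both $h \mid hi$ and $h \mid \tfrac{x^m-1}{\gcd(f,g)}$ hold, forcing $h \mid k$; together with $h \mid f_{(g)}$, this places $h$ inside $\gcd\!\bigl(f_{(g)}, k, \tfrac{x^m-1}{\gcd(f,g)}\bigr)$, which is a unit by assumption, so $h = 1$.

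The main obstacle is the final divisibility chain in the converse: one must verify that the congruence $k \equiv hi \pmod{(x^m-1)/\gcd(f,g)}$ transfers cleanly to $h \mid k$, and this hinges on the preliminary observation $f_{(g)} \mid \tfrac{x^m-1}{\gcd(f,g)}$, which follows because $f \mid x^m-1$ forces $x^m-1 = f \cdot q(x)$ and hence $\tfrac{x^m-1}{\gcd(f,g)} = f_{(g)} \cdot q(x)$. Once this small divisibility is secured, the remainder is a direct translation of Lemma~\ref{Re8.2.5}, since units, gcds, and the Euclidean algorithm behave identically in $\F[x]$ as in any Euclidean domain.
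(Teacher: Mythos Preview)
Your proof is correct and follows precisely the approach the paper indicates: the paper's own ``proof'' of Lemma~\ref{Re8.4.1} consists solely of the sentence ``The proof is similar to Lemma~\ref{Re8.2.5},'' and what you have written is exactly that adaptation, carried out carefully in the $\F[x]$-module setting. Your explicit verification that $f_{(g)}\mid \tfrac{x^m-1}{\gcd(f,g)}$ (needed for the divisibility chain in the converse) is a useful clarification that the paper leaves implicit even in the proof of Lemma~\ref{Re8.2.5}.
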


The proof is similar to Lemma~\ref{Re8.2.5}.

\begin{lemma}\label{Re8.4.2}
Let $t | x^m-1$ and we take some $\omega\in \Fm$. Then 
\begin{center}
$\underset{ Ord(\psi)|t}{\sum}\psi(\omega)=\begin{cases}
                         \left| \frac{\F[x]}{<t>}\right|\, \mbox{or } \, q^{deg(t)} , \,\mbox{ if }\, t\mid \frac{x^m-1}{Ord(\omega)},  \\
                         0, \,\mbox{ otherwise}. 
                         \end{cases}$
\end{center}
\end{lemma}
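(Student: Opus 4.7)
The plan is to reduce the lemma to the standard orthogonality relations for additive characters of $\Fm$, applied to the subgroup $H_t := \{\psi \in \widehat{\Fm} : Ord(\psi) \mid t\}$, so that the sum in the lemma becomes $\sum_{\psi \in H_t}\psi(\omega)$. My first task is to identify $H_t$ concretely and compute its cardinality. By the very definition of the $\F$-order of a character, $Ord(\psi) \mid t$ iff $t \circ \psi$ is the trivial character, which unfolds to $\psi(L_t(\alpha)) = 1$ for every $\alpha \in \Fm$, i.e.\ $\psi$ is trivial on the submodule $t \circ \Fm = L_t(\Fm)$. Since $\Fm$ is a cyclic $\F[x]$-module with annihilator $x^m-1$ and $t \mid x^m-1$, one has $t \circ \Fm \cong \F[x]/\bigl((x^m-1)/t\bigr)$, whence $|t \circ \Fm| = q^{m-\deg(t)}$ and consequently $|H_t| = q^m/q^{m-\deg(t)} = q^{\deg(t)}$.

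Next I will invoke the orthogonality relation: for any subgroup $H \le \widehat{\Fm}$, $\sum_{\psi \in H}\psi(\omega) = |H|$ if $\omega$ lies in $H^\perp := \{\omega : \psi(\omega)=1 \text{ for all } \psi \in H\}$, and equals $0$ otherwise. Specialising to $H = H_t$, what remains is to check that $H_t^\perp = t \circ \Fm$. The inclusion $t \circ \Fm \subseteq H_t^\perp$ is immediate: if $\omega = t \circ \eta$ and $\psi \in H_t$, then $\psi(\omega) = \psi(L_t(\eta)) = (t \circ \psi)(\eta) = 1$. The reverse inclusion follows from the dimension count $|H_t^\perp| = |\Fm|/|H_t| = q^{m-\deg(t)} = |t \circ \Fm|$, so the two subgroups coincide.

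Finally, I will translate the membership $\omega \in t \circ \Fm$ into the stated order condition. Fixing a normal element $g_0$ and writing $\omega = f \circ g_0$, Lemma~\ref{Re8.3.1} (or the cyclic-module isomorphism $\Fm \cong \F[x]/(x^m-1)$) gives $Ord(\omega) = (x^m-1)/\gcd(f, x^m-1)$. Hence $\omega \in t \circ \Fm$ iff $t \mid \gcd(f, x^m-1)$ iff $Ord(\omega) \mid (x^m-1)/t$ iff $t \mid (x^m-1)/Ord(\omega)$, which is exactly the condition stated in the lemma. Combining this with the previous step yields the value $|H_t| = q^{\deg(t)} = |\F[x]/(t)|$ on the non-vanishing branch.

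I do not anticipate a substantive obstacle; the argument is the additive-module counterpart of Lemma~\ref{Re8.2.6}, which the paper itself asserts follows from orthogonality relations. The only mildly delicate step is the identification $H_t^\perp = t \circ \Fm$, and I dispatch that by the dimension count above rather than by an explicit module-theoretic inversion.
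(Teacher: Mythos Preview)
Your proposal is correct and matches the paper's intended approach: the paper gives no explicit proof of this lemma, and for the parallel Lemma~\ref{Re8.2.6} it simply remarks that the result follows from orthogonality relations, which is precisely the route you take. Your identification of $H_t$ as the annihilator of $t\circ\Fm$, the cardinality computation $|H_t|=q^{\deg(t)}$, and the translation of $\omega\in t\circ\Fm$ into the divisibility condition $t\mid (x^m-1)/Ord(\omega)$ are all sound, so the argument goes through as written.
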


\begin{definition}\label{Def8.4.2}
Now we define the function 
$$\Omega_{f,g}(\omega)=\frac{\Phi(f_{(g)})}{q^{deg(f)}} \underset{h|f}{\sum}\frac{\mu(h_{(g)})}{\Phi(h_{(g)})}\underset{ Ord(\psi)=h}{\sum}\psi(\omega), \, \omega\in \Fm $$
and prove it to be characteristic function for $(f,\, g)$-free elements of $\Fm$. 
\end{definition}

\begin{lemma}\label{Re8.4.3}

Let $\omega \in \Fm$, then
\begin{center}
$\Omega_{f, g}(\omega)=\begin{cases}
                         1, \,\mbox{ if } \omega \mbox{ is } (f,\,  g)\mbox{-free},\\
                         0, \,\mbox{ otherwise}. 
                         \end{cases} $ 
\end{center}

\end{lemma}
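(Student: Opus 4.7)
The plan is to mirror the proof of Lemma~\ref{Re8.2.7}, transporting it from the abstract cyclic $R$-module setting to the $\F[x]$-module $\Fm$. The three ingredients that will do the work are the order-theoretic characterization of $(f,g)$-freeness in Lemma~\ref{Re8.4.1}, the orthogonality sum in Lemma~\ref{Re8.4.2}, and multiplicativity of $\mu$ and $\Phi$ on coprime polynomials.

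First I would factor $f = p_1^{e_1} \cdots p_k^{e_k}$ into distinct monic irreducibles of $\F[x]$ and reduce to the prime-power case $f = p^s$. On the combinatorial side, Definition~\ref{Def8.4.1} shows that $\omega$ is $(f,g)$-free if and only if it is $(p_i^{e_i}, g)$-free for every $i$, because the defining conditions involve $\mathrm{gcd}(f, g)$ and the divisors of $f_{(g)}$, both of which split over the $p_i$. On the analytic side, $\Omega_{f,g} = \prod_i \Omega_{p_i^{e_i}, g}$, since $\mu$ and $\Phi$ are multiplicative on coprime arguments, the divisor sum $\sum_{h\mid f}$ factors, and the characters of $\Fm$ of $\F$-order a product of coprime polynomials decompose accordingly.

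Next I would split into two cases according to the $p$-valuation of $g$. If $p^s \nmid g$, write $\mathrm{gcd}(p^s, g) = p^t$ with $0 \leq t < s$; using $\mu(p^j) = 0$ for $j \geq 2$, the inner M\"obius-weighted sum in $\Omega_{p^s, g}(\omega)$ telescopes to
\[
\underset{Ord(\psi) \mid p^t}{\sum} \psi(\omega) \;-\; \frac{1}{\Phi(p)} \underset{Ord(\psi) = p^{t+1}}{\sum} \psi(\omega),
\]
exactly as in Equation~\eqref{eq8.2.1}. By Lemma~\ref{Re8.4.1}, $\omega$ is $(p^s, g)$-free iff $p^t$ is the exact power of $p$ dividing $\frac{x^m-1}{Ord(\omega)}$; Lemma~\ref{Re8.4.2} then evaluates each of the two subsums in the free case and in the two non-free subcases ($p^t \nmid \frac{x^m-1}{Ord(\omega)}$ and $p^{t+1} \mid \frac{x^m-1}{Ord(\omega)}$). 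A short calculation using $q^{\deg p} - 1 = \Phi(p)$ and $\Phi(p^i) = q^{(i-1)\deg p}\Phi(p)$ then shows the combination equals $1$ in the free case and $0$ otherwise. If instead $p^s \mid g$, then $p^i_{(g)} = 1$ for every $0 \leq i \leq s$, so $\Omega_{p^s,g}(\omega)$ collapses to $q^{-s\deg p} \sum_{Ord(\psi)\mid p^s} \psi(\omega)$, and Lemmas~\ref{Re8.4.1} and~\ref{Re8.4.2} deliver the $0/1$ dichotomy immediately.

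The main obstacle is bookkeeping rather than new ideas: aligning the cardinalities $q^{\deg(\cdot)}$ with the evaluations of $\mu$ and $\Phi$ at $p^{i-t}$, and matching them against the three subcases of the $p$-adic valuation of $\frac{x^m-1}{Ord(\omega)}$. Beyond this, the proof of Lemma~\ref{Re8.2.7} transplants essentially verbatim under the dictionary $R \leftrightarrow \F[x]$, $m \leftrightarrow x^m-1$, $|(R/rR)| \leftrightarrow q^{\deg r}$, since $\F[x]$ is a Euclidean domain and $\Fm$ is a finite cyclic $\F[x]$-module.
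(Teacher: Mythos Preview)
Your proposal is correct and follows exactly the approach the paper intends: the paper's own proof of Lemma~\ref{Re8.4.3} consists solely of the sentence ``The proof is similar as Lemma~\ref{Re8.2.7},'' and you have faithfully spelled out that transplant via the dictionary $R\leftrightarrow\F[x]$, $m\leftrightarrow x^m-1$, $|(R/rR)|\leftrightarrow q^{\deg r}$, invoking Lemmas~\ref{Re8.4.1} and~\ref{Re8.4.2} in place of Lemmas~\ref{Re8.2.5} and~\ref{Re8.2.6}. Your outline is in fact more detailed than what the paper provides.
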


The proof is similar as Lemma~\ref{Re8.2.7}.

\section{ $(f,\,  g)$-freeness through polynomial values}\label{sec8.5}
For divisors $f,\,  F,\,  g,\,  G$ of $x^m-1$ and polynomials $h,\,  H\in \Fm[x]$, we study the number of pairs $(h(y),\,  H(y))$ such that $h(y)$ is $(f,\, g)$-free and $H(y)$ is $(F,\,  G)$-free with $y\in \Fm$. This number can be zero if $h(x)$ and $H(x)$ have certain additive dependence with respect to the polynomials $h,\,  H$.

Following is the definition of dependence for polynomials in $\Fm[x].$

\begin{definition}\label{Def8.5.1} 
Let $f,\,  F\in \Fm[x]$ be divisors of $x^m-1$ and let $h,\,  H\in \Fm[x]$ be non constant polynomials. The pair $(h,\, H)$ is $(f,\, F)$-independent if for every $t|f$ and $T|F$ with $l=\mathrm{ lcm}(t,\,  T)\neq 1$ (we choose $t,\,  T$ to be monic) and $1\leq deg(c) \leq deg(t)$ and $1\leq deg(C) \leq deg(T)$ with $\mathrm{ gcd}(t,\, c)= \mathrm{ gcd}(T,\, C)=1$,  the polynomial
$L_{\frac{lc}{t}}(h(x))+ L_{\frac{lC}{T}}(H(x))$ is not of the form $L_l(g(x))+k$, for some $g\in \F[x]$ and $k\in \Fm$.
\end{definition}

We continue with the following proposition.

\begin{proposition} \label{Prop8.5.1}
With the terms and conditions as above, if $\mathfrak{G}(x)= L_{\frac{lc}{t}}(h(x))+ L_{\frac{lC}{T}}(H(x))$ is not of the form $L_l(g(x))+k$ for any $g(x)\in \F[x]$ and for any $k\in \Fm$, then $\mathfrak{G}(x)$ is definitely not of the form $a(x)^p- a(x)+b$, for any $a(x) \in \F[x]$ and $b\in \Fm$, where $q=p^n$ for some positive integer $n$.
\end{proposition}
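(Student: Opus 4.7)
The plan is to prove the contrapositive: assume $\mathfrak{G}(x)=a(x)^p-a(x)+b$ for some $a\in\F[x]$ and $b\in\Fm$, and exhibit $g\in\F[x]$ and $k\in\Fm$ with $\mathfrak{G}(x)=L_l(g(x))+k$.

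First I would exploit the given representation of $\mathfrak{G}$. Since $l=\mathrm{lcm}(t,T)$, both $l/t$ and $l/T$ lie in $\F[x]$, and by multiplicativity of $q$-associates we have
\begin{equation*}
L_{lc/t}(h(x))=L_{l/t}\bigl(L_c(h(x))\bigr),\qquad L_{lC/T}(H(x))=L_{l/T}\bigl(L_C(H(x))\bigr),
\end{equation*}
so each summand of $\mathfrak{G}$ factors through $L_{l/t}$ or $L_{l/T}$ rather than through $L_l$ directly. The coprimalities $\gcd(t,c)=1=\gcd(T,C)$ then furnish Bezout identities in $\F[x]$, which, on the level of the $\F[z]$-module structure on $\Fm[x]$ induced by the $q$-Frobenius $z$, allow $L_c$ and $L_C$ to be inverted modulo $L_t$ and $L_T$ respectively.

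Next I would apply the telescoping identity
\begin{equation*}
a(x)^q-a(x)=\sum_{i=0}^{n-1}\bigl(a(x)^{p^{i+1}}-a(x)^{p^i}\bigr),
\end{equation*}
which, because $q=p^n$, presents $a(x)^q-a(x)=L_{x-1}(a(x))$ as a telescoping sum of $p$-Frobenius shifts of $a^p-a$. Combining this with the factorizations above, and using that $x-1$ divides $x^m-1$ in concert with the structural role of $l$, permits an explicit construction of $g\in\F[x]$, assembled from the Bezout coefficients together with the auxiliary polynomials $L_c(h)$, $L_C(H)$, and the iterated $p$-Frobenius images of $a$, and a constant $k\in\Fm$ absorbing $b$, so that $\mathfrak{G}(x)=L_l(g(x))+k$.

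The principal obstacle is bridging the $\F_p$-linear Artin-Schreier structure of $y\mapsto y^p-y$ with the $\F$-linear structure of $L_l$: since $q=p^n$ with $n$ possibly greater than one, a single $p$-Frobenius step does not match a single $q$-Frobenius step, so matching the exponent classes of $x$ modulo $p$ against those modulo $q$ on the two sides is delicate. I expect the subtlest point to be verifying that the resulting $g$ has coefficients in $\F$ and not merely in $\Fm$; this will rely on $a\in\F[x]$ together with the fact that $L_l$, $L_c$ and $L_C$ all preserve $\F$-rationality of coefficients.
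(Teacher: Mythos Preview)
The paper states this proposition without proof, so there is no argument on the authors' side to compare your attempt against; one can only ask whether your outline would actually establish the contrapositive.

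It does not close. The telescoping identity
\[
a(x)^q-a(x)=\sum_{i=0}^{n-1}\bigl(a(x)^{p^{i+1}}-a(x)^{p^i}\bigr)
\]
decomposes the $q$-difference $L_{x-1}(a)$ into a sum of $p$-differences, which is the reverse of what the contrapositive demands: you start from the single $p$-difference $a^p-a$ and must exhibit it, up to a constant, as $L_l(g)$ for the \emph{fixed} polynomial $l=\mathrm{lcm}(t,T)\neq 1$. None of the ingredients you list --- the factorisations $L_{lc/t}=L_{l/t}\circ L_c$ and $L_{lC/T}=L_{l/T}\circ L_C$, the Bezout relations coming from $\gcd(t,c)=\gcd(T,C)=1$, or the divisibility $(x-1)\mid x^m-1$ --- produces a candidate $g$; you never write one down, and the phrase ``permits an explicit construction of $g$'' is an assertion rather than a construction. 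The ``principal obstacle'' you flag at the end, namely matching exponent classes modulo $p$ against those modulo $q$ when $n>1$, is exactly the point where the argument must do real work, and your sketch supplies no mechanism for it.

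If the proposition holds, the leverage has to come from the very particular shape $\mathfrak{G}=L_{lc/t}(h)+L_{lC/T}(H)$, which imposes relations among the coefficients of $\mathfrak{G}$ that a generic polynomial of the form $a^p-a+b$ need not satisfy. Your proposal effectively treats $\mathfrak{G}$ as arbitrary once it is assumed equal to $a^p-a+b$, and so never engages the one hypothesis that could drive the implication.
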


\begin{lemma}\label{Re8.5.2}
Let $f,\,  F\in \Fm[x]$ be divisors of $x^m-1$. If the pair $(h,\,  H)$ is not $(f,\,  F)$-independent, then there exist divisors $r,\, R\in \Fm[x] $ of $x^m-1$ in a way 
that there is no element $\theta\in \Fm$ such that $h(\theta)$ is $(f,\, r)$-free and $H(\theta)$ is $(F,\, R)$-free.
\end{lemma}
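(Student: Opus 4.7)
\emph{Proof plan.} The plan is to turn the polynomial-level dependence guaranteed by Definition~\ref{Def8.5.1} into a pointwise relation on $\Fm$ that precludes simultaneous freeness. Unpacking the hypothesis, since $(h,H)$ is not $(f,F)$-independent, there exist monic divisors $t\mid f$ and $T\mid F$ with $l:=\mathrm{lcm}(t,T)\neq 1$, polynomials $c,C\in\F[x]$ with $\gcd(c,t)=\gcd(C,T)=1$ satisfying the prescribed degree bounds, and $g\in\F[x]$, $k\in\Fm$, such that the polynomial identity
\[
L_{lc/t}(h(x))+L_{lC/T}(H(x))=L_l(g(x))+k
\]
holds in $\Fm[x]$.

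The first candidates to try are $r:=t$ and $R:=T$, which divide $x^m-1$ because $t\mid f\mid x^m-1$ and $T\mid F\mid x^m-1$. Suppose some $\theta\in\Fm$ makes $h(\theta)$ $(f,r)$-free and $H(\theta)$ $(F,R)$-free. The image characterization of $(f,g)$-freeness (which follows from Lemma~\ref{Re8.4.1} together with Lemma~\ref{Re8.3.1}) then yields $\beta,B\in\Fm$ with $h(\theta)=L_t(\beta)$ and $H(\theta)=L_T(B)$. Substituting into the identity evaluated at $\theta$ and using $L_a(L_b(\cdot))=L_{ab}(\cdot)$ reduces it to
\[
L_l\bigl(L_c(\beta)+L_C(B)-g(\theta)\bigr)=k,
\]
which is possible only if $k\in L_l(\Fm)$. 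When $k\notin L_l(\Fm)$, no such $\theta$ exists and the conclusion holds with $(r,R)=(t,T)$.

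The main obstacle is the remaining case $k\in L_l(\Fm)$, in which the naive choice does not yet rule out a valid $\theta$. The plan there is to enlarge the witnesses $(t,T)$ to $(tp,Tp')$ for suitably chosen monic irreducibles $p\mid f_{(t)}$ and $p'\mid F_{(T)}$: the coprimality conditions $\gcd(c,t)=\gcd(C,T)=1$ force the identity, rewritten with $l':=\mathrm{lcm}(tp,Tp')$ in place of $l$, to carry a new residual constant in $\Fm$ that can no longer be absorbed into $L_{l'}(\Fm)$. Proposition~\ref{Prop8.5.1}, which rules out any Artin--Schreier form $a(x)^p-a(x)+b$ for $\mathfrak{G}(x)$, prevents this absorption from propagating indefinitely, so the enlargement terminates at some pair $(r,R)\mid x^m-1$ for which the derived equation $L_{l'}(\cdots)=k'$ is unsolvable. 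The technical heart is verifying that the refined $\F$-order constraints imposed by $(f,r)$- and $(F,R)$-freeness remain incompatible with the identity at each step, which is where the coprimality bounds $1\le\deg c\le\deg t$ and $1\le\deg C\le\deg T$ enter decisively.
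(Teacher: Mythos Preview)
Your first step is fine: the dependence hypothesis gives the identity, and with $(r,R)=(t,T)$ the order condition in $(f,t)$- and $(F,T)$-freeness does force $h(\theta)=L_t(\beta)$, $H(\theta)=L_T(B)$, whence $L_l\bigl(L_c(\beta)+L_C(B)-g(\theta)\bigr)=k$. When $k\notin L_l(\Fm)$ this indeed finishes the job.

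The gap is the case $k\in L_l(\Fm)$, which is not exceptional (it includes $k=0$). Your fallback plan does not work, for three reasons. First, the identity and the data $t,T,l,c,C$ are handed to you by the hypothesis; there is no way to ``rewrite it with $l'=\mathrm{lcm}(tp,Tp')$ in place of $l$'', because no second identity exists. Second, enlarging $(r,R)$ to $(tp,Tp')$ only strengthens the order constraints to $Ord(h(\theta))\mid (x^m-1)/(tp)$ and $Ord(H(\theta))\mid (x^m-1)/(Tp')$; substituting back you still land in $L_l(\Fm)$ on the left, so the obstruction is unchanged. Third, the enlargement need not even be available: if $t=f$ then $f_{(t)}=1$ has no irreducible factor $p$, and similarly for $T=F$. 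Finally, Proposition~\ref{Prop8.5.1} is about excluding an Artin--Schreier shape $a(x)^p-a(x)+b$; it says nothing about whether a given constant $k$ lies in $L_l(\Fm)$, so invoking it here is a non sequitur.

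The paper avoids this obstacle by a different manoeuvre: it evaluates the identity at $\theta$ and then acts by $(x^m-1)/l$, which annihilates the term $L_l(g(\theta))$ outright (since $x^m-1$ annihilates all of $\Fm$). This reduces the problem to a relation between $\F$-orders of $h(\theta)$ and $H(\theta)$, and after invoking Lemma~\ref{Re8.4.1} one obtains a purely arithmetic constraint on any admissible pair $(r,R)$, namely
\[
t\cdot\gcd\!\bigl(T_{(t)},R\bigr)=T\cdot\gcd\!\bigl(t_{(T)},r\bigr),
\]
which can be violated by an explicit choice of $(r,R)$ (split into the cases $t\neq T$ and $t=T$). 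The key idea you are missing is precisely this annihilation step, which removes $g(\theta)$ and hence the dependence on whether $k\in L_l(\Fm)$.
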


\begin{proof}

 Since $(h,\,  H)$ is not $(f,\, F$)-independent, there exist some $t|f$ and $T|F$ with $l= \mathrm{ lcm}(t,\, T)\neq 1$ and $1 \leq deg(c) \leq deg(t)$ and $1 \leq deg(C) \leq deg(T)$ with $\mathrm{ gcd}(t,\, c)=\mathrm{ gcd}(T,\, C)=1$, such that for some $g\in\Fm[x]$ and $k\in \Fm[x]$
\begin{center}
$L_{\frac{lc}{t}}(h(x))+ L_{\frac{lC}{T}}(H(x))= L_l(g(x))+k.$
\end{center}
Then for every $\theta\in \Fm$ we have 
\begin{center}
$\left(\frac{lc}{t}\right)\circ h(\theta) + \left( \frac{lC}{T}\right)\circ H(\theta)= l\circ g(\theta) +k. $
\end{center}
Then multiplying by $\frac{x^m-1}{l}$ we have 

\begin{equation}\label{eq8.5.1}
\frac{(x^m-1)c}{t}\circ h(\theta)+ \frac{(x^m-1)C}{T}\circ H(\theta)= \left(\frac{x^m-1}{l}\right)\circ k. 
\end{equation}

First we consider the case $t\neq T$ (we have already chosen $t, T$ as monic). From the Equation~\eqref{eq8.5.1}, we have 
\begin{center}
$\frac{T(x^m-1)c}{t}\circ h(\theta)= \frac{T(x^m-1)}{l}\circ k$ and $\frac{t(x^m-1)C}{T}\circ H(\theta)= \frac{t(x^m-1)}{l}\circ k$
\end{center} 
Since right hand sides are inverses of one another, hence we have 
\begin{align*}
&\frac{Ord (h(\theta))}{\mathrm{ gcd}  \left(Ord(h(\theta)),\frac{x^m-1}{t}.T.c \right)} = 
\frac{ Ord(H(\theta))}{\mathrm{ gcd} \left( Ord(H(\theta)),\frac{x^m-1}{T}.t.C \right)} \\
&\Leftrightarrow t. \mathrm{ gcd}  \left( T, \frac{x^m-1}{Ord(H(\theta))}.t.C\right) = T. \mathrm{ gcd}  \left( t, \frac{x^m-1}{ Ord(h(\theta))}.T.c\right). 
\end{align*}

We divide  both the parts of above by $\mathrm{ gcd} (t,\, T)$ and get
\begin{center}
$t. \mathrm{ gcd}  \left( \Tt, \frac{x^m-1}{Ord (H(\theta))}.\tT.C\right) = T. \mathrm{ gcd}  \left( \tT, \frac{x^m-1}{Ord (h(\theta))}.\Tt.c\right).$
\end{center}

Since $\mathrm{ gcd} (\tT, \Tt.c)= \mathrm{ gcd}  (\Tt, \tT.C)=1$, the above equation becomes 

\begin{equation}\label{eq8.5.2}
t. \mathrm{ gcd}  \left( \Tt, \frac{x^m-1}{Ord  (H(\theta))}\right)=  T. \mathrm{ gcd}  \left( \tT, \frac{x^m-1}{ Ord (h(\theta))}\right).
\end{equation}

If possible, let there be some $\theta\in \Fm$, such that $h(\theta)$ is $(f,r)$-free and $H(\theta)$ is $(F,R)$-free, for some divisors $r, R\in \Fm[x]$ of $x^m-1$. Using Lemma~\ref{Re8.4.1}, we have $\mathrm{ gcd} \left( f, \frac{x^m-1}{Ord (h(\theta))}\right)= \mathrm{ gcd} (f, r)$.  Then combining this with the fact that $\tT|f$, we have that   $\mathrm{ gcd} \left( \tT, \frac{x^m-1}{Ord (h(\theta))}\right)= \mathrm{ gcd} (\tT, r).$ Similarly we have  $\mathrm{ gcd} \left( \Tt, \frac{x^m-1}{ Ord (H(\theta))}\right)= \mathrm{ gcd} (\Tt, R).$  Then Equation~\eqref{eq8.5.2} gives 
\begin{equation}\label{eq8.5.3}
t. \mathrm{ gcd} (\Tt, R)= T. \mathrm{ gcd} (\tT, r).
\end{equation}

Since $t\neq T$, without loss of generality we assume that there is some non constant monic polynomial $d\in\Fm[x]$ such that $d|\tT$. Now we choose $R= \Tt$ and $r= \frac{\tT}{d}$, then Equation~\eqref{eq8.5.3} yields $\frac{tT}{\mathrm{ gcd} (t,T)}=\frac{tT}{d\, \mathrm{ gcd} (t,T)}$, a contradiction.

Now we have the case $t=T$. In this case, Equation~\eqref{eq8.5.1} becomes 
\begin{center}
$\frac{(x^m-1)c}{l}\circ h(\theta) +\frac{(x^m-1)C}{l}\circ H(\theta)= \frac{(x^m-1)}{l}\circ k$.
\end{center} 

Let us set $r=R=l$. In this case if $h(\theta)$ is $(f,\, l)$-free, then its $\F$-order divides $\frac{x^m-1}{l}$, hence $\frac{(x^m-1)c}{l}\circ h(\theta)=0$. Similarly $\frac{(x^m-1)C}{l}\circ H(\theta)=0$ and hence their sum must be zero i.e., $\frac{x^m-1}{l}\circ k= 0$. Then identical arguments lead us to Equation~\eqref{eq8.5.3}, i.e.,  
\begin{center}
$t.\mathrm{ gcd} (T,\,  R)= T. \mathrm{ gcd} (t,\, r).$
\end{center} 
 
Since $\mathrm{ lcm} (t,\, T)$ is non constant monic polynomial in $\Fm[x]$, hence without loss of generality we may assume that there exists some non constant $e\in\Fm[x]$ such that $e|t$. Now we choose $R=T$ and $r= \frac{t}{e}$, then above yields $tT= \frac{tT}{e}$, a contradiction.

This completes the proof.

\end{proof}

\begin{theorem}\label{Re8.5.3}
Let $f,\, F,\, r,\, R \in \Fm[x]$ be divisors of $x^m-1$ and for $h,\, H \in \Fm[x]$, the pair $(h,\, H)$ is $(f,\, F)$-independent. Let $D+1\geq 1$ be the number of distinct roots of $h(x)+H(x)$ in its splitting field over $\F$. The number $\N_{h,\, H}=\N_{h,\, H}(f,F,r,R)$ of elements $\theta\in\Fm$ such that $h(\theta)$ is $(f,\, r)$-free and $H(\theta)$ is $(F,\, R)$-free satisfies 
\begin{center}
$\N_{h,H}=\frac{\Phi(f_{(r)})\Phi(F_{(R)})}{q^{deg(f)}q^{deg(F)}}\left( q+ M(f,r,F,R)\right),$
\end{center}
\end{theorem}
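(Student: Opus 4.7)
The approach is to express $\N_{h,H}$ as a character sum via the characteristic function from Lemma~\ref{Re8.4.3}, separate the dominant contribution, and bound the remaining terms using Weil's bound for additive character sums, invoking the independence hypothesis through Proposition~\ref{Prop8.5.1}. Starting from
\[
\N_{h,H} = \sum_{\theta \in \Fm} \Omega_{f,r}(h(\theta))\,\Omega_{F,R}(H(\theta))
\]
and expanding with Definition~\ref{Def8.4.2} gives
\[
\N_{h,H} = \frac{\Phi(f_{(r)})\Phi(F_{(R)})}{q^{\deg(f)+\deg(F)}} \sum_{t\mid f}\sum_{T\mid F} \frac{\mu(t_{(r)})\mu(T_{(R)})}{\Phi(t_{(r)})\Phi(T_{(R)})} \sum_{\substack{Ord(\psi)=t\\ Ord(\chi)=T}} S(\psi,\chi),
\]
where $S(\psi,\chi) := \sum_{\theta\in\Fm}\psi(h(\theta))\chi(H(\theta))$.

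The pair $(t,T)=(1,1)$ produces the main term: both characters are trivial, so $S(1,1)=|\Fm|$, which accounts for the first summand of the claimed expression. For every remaining pair $(t,T)\neq(1,1)$, set $l = \mathrm{lcm}(t,T)$ and fix a character $\psi_l$ of $\F$-order exactly $l$. Pontryagin-type duality for the cyclic $\F[x]$-module $\widehat{\Fm}$ lets me parametrise characters uniquely as $\psi(\alpha) = \psi_l(L_{lc/t}(\alpha))$ with $c$ coprime to $t$ and $\deg(c)<\deg(t)$, and similarly $\chi(\alpha) = \psi_l(L_{lC/T}(\alpha))$. Substituting gives
\[
S(\psi,\chi) = \sum_{\theta\in\Fm}\psi_l\bigl(L_{lc/t}(h(\theta))+L_{lC/T}(H(\theta))\bigr).
\]

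By the $(f,F)$-independence of $(h,H)$ in Definition~\ref{Def8.5.1}, the polynomial inside is not of the form $L_l(g(x))+k$, and Proposition~\ref{Prop8.5.1} then excludes the Artin--Schreier form $a(x)^p-a(x)+b$. This is exactly the hypothesis under which Weil's bound for additive character sums over $\Fm$ yields a nontrivial estimate of the shape $|S(\psi,\chi)|\le D\sqrt{q^m}$, where $D+1$ is the number of distinct roots of $h+H$ (this controls the effective degree of the Artin--Schreier cover defined by the exponent polynomial). Aggregating over the $\Phi(t)\Phi(T)$ admissible choices of $(c,C)$, all divisor pairs $(t,T)\neq(1,1)$, and the Möbius/Euler factors yields an error whose value is, by definition, $M(f,r,F,R)$.

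The main obstacle I anticipate is the bookkeeping: verifying that the parametrisation $(\psi,\chi)\mapsto(c,C)$ is bijective and reproduces $S(\psi,\chi)$ pointwise through a single character $\psi_l$, and matching the Weil degree count against the stated hypothesis of ``$D+1$ distinct roots of $h+H$'' — especially since cancellations between the leading terms of the $q$-associates $L_{lc/t}(h)$ and $L_{lC/T}(H)$ can lower the effective degree and must be controlled carefully. Once those points are settled, the main term plus the error combine to produce precisely the formula claimed.
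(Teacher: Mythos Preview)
Your proposal follows essentially the same route as the paper: expand $\N_{h,H}$ through the characteristic function $\Omega$, isolate the contribution of the trivial pair $(t,T)=(1,1)$ as the main term, rewrite $\psi(h(\theta))\chi(H(\theta))$ as a single additive character $\widetilde\psi$ of $\F$-order $l=\mathrm{lcm}(t,T)$ applied to $L_{lc/t}(h)+L_{lC/T}(H)$, invoke $(f,F)$-independence to rule out the degenerate form, and then apply Weil's bound; the remaining sum over divisor pairs is exactly $T(f,r)T(F,R)-1=Q_{f,r}Q_{F,R}-1$ by Lemma~\ref{Re8.3.2}. The only cosmetic difference is that the paper records the trivial term as $q^m-\varepsilon$ (subtracting the roots of $h+H$ that lie in $\Fm$) and then absorbs the small $\varepsilon\le D+1$ into the error, whereas you take $S(1,1)=q^m$ directly; and the paper simply cites ``Weil's theorem'' where you more explicitly pass through Proposition~\ref{Prop8.5.1} to reach the Artin--Schreier non-degeneracy hypothesis.
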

with $|M(f,r,F,R)|\leq D q^{m/2} Q_{f,r}Q_{F,R}$, where $Q_{g,h}:= \left| \frac{\F[x]}{<k>}\right|.W\left(\left| \frac{\F[x]}{<y>}\right| \right)$ with $k= gcd(g,h)$, $y=\mathrm{ gcd}  (g, g_{(h)})$.

\begin{proof}

 By definition we have, $\N_{h,H}=\underset{\omega\in \Fm}{\sum}\Omega_{f,r}(h(\omega))\Omega_{F,R}(H(\omega))$.
From the characteristic function $\Omega$, for \begin{Large}
$A= \frac{\Phi(f_{(r)})\Phi(F_{(R)})}{q^{deg(f)}q^{deg(F)}} $
\end{Large}, we have that 
\begin{align*}
\N_{h,H}/A &= \underset{\omega\in \Fm}{\sum}\left( \underset{t|f}{\sum}\frac{\mu(t_{(r)})}{\Phi(t_{(r)})}\underset{Ord(\psi)}{\sum} \psi(h(\omega))\right) \left( \underset{T|F}{\sum}\frac{\mu(T_{(R)})}{\Phi(T_{(R)})}\underset{Ord(\kappa)}{\sum} \kappa(H(\omega))\right)\\
&= \underset{\underset{T|F}{t|f}}{\sum}\frac{\mu(t_{(r)})\mu(T_{(R)})}{\Phi(t_{(r)})\Phi(T_{(R)})}
\underset{Ord(\kappa)=T, Ord(\psi)=t}{\sum}G_{h,H}
(\psi, \kappa),
\end{align*}

where $G_{h,H}= \underset{\omega\in \Fm}{\sum}\psi(h(\omega))\kappa(H(\omega))$.

Fix $t|f$ and $T|F$ (we choose $t,T$ to be monic), let $\psi, \kappa$ be additive characters of $\F$-orders $t$ and $T$ respectively, and set $l= \mathrm{lcm}(t,\, T)$. Then  
\begin{center}
$\psi(h(\omega))\kappa(H(\omega))= \widetilde\psi \left( \left(\frac{cl}{t}\right) \circ h(\omega)+ \left(\frac{Cl}{T}\right) \circ H(\omega)\right),$ 
\end{center}
for some additive character $\widetilde\psi$ of $\F$-order $l$ and some polynomials $c, C$ such that $1\leq deg(c)\leq deg(t)$ and $1\leq deg(C) \leq deg(T)$ with $\mathrm{gcd}(t,\, c)= \mathrm{gcd}(T,\, C)=1$.

Since the pair $(h,\, H)$ is $(f,\, F)$-independent, the polynomial $\mathfrak{G}(x)=L_{\frac{cl}{t}}(h(x))+ L_{\frac{Cl}{T}}(H(x))$ is of the form $L_l(g(x))+k$ if and only if $l=1$ i.e. $t=T=1$. Therefore from Weil's theorem we have $|G_{h,H}(\psi,\kappa)|\leq D q^{m/2}$ whenever $(t,\, T)=(1,\, 1)$.

For $t=T=1$, we observe that $\psi$ and $\kappa$ are just the trivial additive characters and so $G_{h,H}(\psi, \kappa)=q^{m}- \varepsilon$, where $\varepsilon$ is the number of the roots of $h(x)+ H(x)$ defined over $\Fm$. Since $\varepsilon \leq D+1$, we have 
\begin{center}
$\left| \N_{h,H}/A- q^m \right| \leq D+1 + Dq^{m/2}E,$
\end{center}
where 
\begin{large}
$E= \underset{\underset{(t,T)\neq (1,1)}{t|f, T|F}}{\sum} \frac{\mu(t_{(r)})\mu(T_{(R)})}{\Phi(t_{(r)})\Phi(T_{(R)})}\underset{Ord(\kappa)=T}{\underset{Ord(\psi)=t}{\sum}}   1= T(f,\, r)T(F,\, R)-1$.
\end{large}

Where $T(f,\, g)$ is as in Lemma~\ref{Re8.3.2} and according to the Lemma~\ref{Re8.5.2}, we have the equality $T(f,\, g)= Q_{f,\, g}$ and so 
\begin{center}
$\left| \N_{h,H}/A - q^m\right| \leq D + 1+ D q^{m/2}(Q_{f,r}Q_{F,R}-1)< Dq^{m/2}Q_{f,r}Q_{F,R}$
\end{center}
from where the result follows.

\end{proof}

\begin{corollary}\label{Re8.5.3.1} Let $f,\, F,\, r,\, R,\, h,\, H$ as in the above theorem.
\begin{center}
If $q^{m/2}\geq D Q_{f,r} Q_{F,R}$, then $\N_{h,H}>0.$
\end{center}

\end{corollary}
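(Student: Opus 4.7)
The plan is to derive this corollary as a direct quantitative consequence of Theorem~\ref{Re8.5.3}. That theorem provides the exact identity
\[
\mathcal{N}_{h,H} \;=\; A\bigl(q^{m} + M(f,r,F,R)\bigr), \qquad A := \frac{\Phi(f_{(r)})\Phi(F_{(R)})}{q^{\deg(f)}q^{\deg(F)}},
\]
together with the bound $|M(f,r,F,R)| \leq D\, q^{m/2} Q_{f,r} Q_{F,R}$; in fact the proof of Theorem~\ref{Re8.5.3} establishes the strict bound $|M(f,r,F,R)| < D\, q^{m/2} Q_{f,r} Q_{F,R}$ (it bounds the ``error'' by $D + 1 + D q^{m/2}(Q_{f,r}Q_{F,R} - 1)$, which is $< D q^{m/2} Q_{f,r} Q_{F,R}$). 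So the strategy is simply to show that, under the hypothesis on $q^{m/2}$, the main term $q^{m}$ dominates the error $M$, and then to note that the prefactor $A$ is strictly positive.

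First, I would observe that $A > 0$: indeed $\Phi(f_{(r)})$ and $\Phi(F_{(R)})$ are the Euler totient-type values associated to nonzero polynomials, hence positive, and the denominators are positive integers. Therefore $\mathcal{N}_{h,H}$ has the same sign as $q^{m} + M(f,r,F,R)$.

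Next, from the assumption $q^{m/2} \geq D\, Q_{f,r} Q_{F,R}$ I would multiply both sides by $q^{m/2}$ (which is positive) to get
\[
q^{m} \;\geq\; D\, q^{m/2}\, Q_{f,r} Q_{F,R} \;>\; |M(f,r,F,R)|,
\]
where the strict second inequality uses the strict error bound recalled above. Consequently $q^{m} + M(f,r,F,R) > 0$, and combining with $A > 0$ yields $\mathcal{N}_{h,H} > 0$, as required.

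There is no real obstacle here: the corollary is a direct pigeonhole-type consequence of the character-sum estimate in the theorem, and the only subtlety worth flagging is that the strict inequality in the conclusion requires the strict form of the error bound coming out of the theorem's proof rather than the weaker non-strict bound displayed in its statement.
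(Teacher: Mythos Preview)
Your argument is correct and is exactly the intended derivation: the paper states the corollary without proof, as an immediate consequence of Theorem~\ref{Re8.5.3}, and your write-up supplies precisely that deduction. Your remark that one needs the \emph{strict} bound $|M|<Dq^{m/2}Q_{f,r}Q_{F,R}$ obtained at the end of the theorem's proof (rather than the $\leq$ displayed in its statement) is a valid and useful observation.
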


\end{document}